\numberwithin{equation}{section}
\def \al{\alpha}
\def \ga{\gamma}
\def \ze{\zeta}
\def \si{\sigma}
\def \te{\theta}
\def \ph{\varphi}
\def \D{\Delta}
\def \O{\Omega}
\def \N{\mathbb{N}}
\def \R{\mathbb{R}}
\def \Z{\mathbb{Z}}
\def \T{\mathbb{T}}
\def\n{\nabla}
\def\dd{\partial}
\def\div{\operatorname{div}}
\def\1{1\!\!\!\!1}
\def\const{\operatorname{const}}
\def\dom{\operatorname{Dom}}
\def\mes{\operatorname{mes}}
\def\im{\operatorname{Im}}
\def\re{\operatorname{Re}}
\theoremstyle{plain}
\newtheorem{theorem}{\bf Theorem}[section]
\newtheorem{lemma}[theorem]{\bf Lemma}
\newtheorem{cor}[theorem]{\bf Corollary}
\theoremstyle{definition}
\theoremstyle{remark}
\newtheorem{rem}[theorem]{\bf Remark}
\renewcommand{\le}{\leqslant}
\renewcommand{\ge}{\geqslant}
\renewcommand{\qed}{\vrule height7pt width5pt depth0pt}
\title{On the Landis conjecture in a cylinder}
\author{N.~D.~Filonov, S.~T.~Krymskii
\thanks{This research is supported by the grant of Russian Science Foundation
No 22-11-00092, https://rscf.ru/project/22-11-00092/}}
\date{}
\begin{document}
\maketitle

\begin{abstract}
The equation $- \Delta u + V u = 0$ in the cylinder
$\R \times (0,2\pi)^d$ with periodic boundary conditions is considered.
The potential $V$ is assumed to be bounded, and both functions $u$ and $V$
are assumed to be {\it real-valued}.
It is shown that the fastest rate of decay at infinity of non-trivial solution $u$
is $O\left(e^{-c|w|}\right)$ for $d=1$ or $2$,
and $O\left(e^{-c|w|^{4/3}}\right)$ for $d\ge 3$.
Here $w$ is the axial variable.
\footnote{Keywords: Landis conjecture, real-valued case, cylinder}
\end{abstract}

\section*{Introduction}
\subsection{Formulation of the result}
Consider the equation
\begin{equation}
\label{01}
-\D u + V u = 0,
\end{equation}
where $V$ is a bounded measurable function. 
Landis has proposed the question about the fastest speed at which a solution $u$ 
can decrease at infinity if we consider the equation \eqref{01} in the entire space $\mathbb{R}^n$ 
or outside a ball in $\mathbb{R}^n$. 
There are two settings of the question: the real case and the complex case. 
A complex-valued function $u$ satisfies the equation \eqref{01} for a certain bounded {\it complex} potential $V$ if and only if
\begin{equation}
\label{02}
\left| \D u\right| \le C |u| .
\end{equation}
If the potential $V$ is real, the equation \eqref{01} is true for the real part $\re u$ and 
for the imaginary part $\im u$. 
It means that the condition that a function $u$ satisfies the equation \eqref{01} 
for a certain bounded {\it real} potential $V$ is equivalent to the estimate \eqref{02} 
with a real-valued function $u.$

For $n=1$ it is easy to see that any solution which decreases super\-exponen\-ti\-ally fast
\begin{equation*}
u(x) = O \big(e^{-N|x|}\big) \quad \forall \ N,
\end{equation*}
is $u \equiv 0$.
On the other hand, there exist nontrivial solutions which decrease exponen\-ti\-al\-ly. 
For example, the function $u(x) = \exp (-\sqrt{x^2+1})$ satisfies the estimate \eqref{02}. 
Therefore, for $n=1$ the answer is clear both in the real and in the complex cases.

The question in the multidimensional complex case was solved by Meshkov \cite{M}. 
He showed that any function $u$ saisfying \eqref{02} and 
\begin{equation*}
u(x) = O \big(e^{-N|x|^{4/3}}\big) \quad \forall \ N,
\end{equation*}
is $u \equiv 0$.
Also for $n=2$ he constructed a nontrivial function $u$ satisfying the estimate \eqref{02} 
and the estimate
$$
u(x) = O \big(e^{-c|x|^{4/3}}\big) 
$$ 
for a certain $c>0$.

The real multidimensional case remains an open problem. 
Recently in \cite{LMNN} it has been shown that for $n=2$,
 any real function  $u$ satisfying \eqref{02} and
$$u(x) = O \big( e^{-N |x|\sqrt{\ln|x|}}\big), \quad \text{for}\ |x| \to \infty, \quad \forall \ N$$
is $u \equiv 0$.

We consider an analogous question in a cylinder
$$
\Pi := \R \times \T^d, \qquad \T^d = \mathbb{R}^d / (2\pi\Z)^d .
$$
Our main result is the following

\begin{theorem}
\label{t04}
Let $d \ge 3$, $N>0$.
There exists a real-valued function $F \in C^\infty \left(\R\times \T^d\right),$ 
$F \not\equiv 0,$ such that
\begin{equation}
\label{07}
|\D F(x)| \le C |F(x)|,
\end{equation}
\begin{equation}
\label{08}
|F(x)| \le C e^{-N|w|^{4/3}}
\end{equation}
for all $x = (w,y) \in \R \times \T^d$.
\end{theorem}

\begin{rem}
If one constructs such function $F(w, y_1, y_2, y_3)$ for $d=3$,
then the same function works also as an example for $d>3$.

Assume that the function $F_1 (w,y)$ satisfies \eqref{07}, \eqref{08}
with $N=1$. 
Consider the function $\tilde F (w,y) = F_1 (nw,ny)$ with some natural $n$.
Then $\tilde F$ is also $(2\pi)$-periodic in each variable $y_j$,
$$
\left|\D \tilde F(w,y)\right| = n^2 \left|\D F_1 (nw, ny)\right| 
\le n^2 C \left|F_1 (w,y)\right|,
$$
$$
\left|\tilde F(w,y)\right| \le C e^{-n^{4/3} |w|^{4/3}}.
$$
So, if there is a function for $N=1$ then there is a such function for any $N$.

Thus, Theorem \ref{t04} follows from its particular case $d=3$ and $N=1$.
In the sequel we construct an example just for $d=3$ and $N=1$.
\end{rem}

Bourgain and Kenig proved the following estimate.

\begin{theorem}[\cite{BK}, Lemma 3.10]
\label{BK}
Assume that a function $u$ satisfies \eqref{02} in $\R^n$, $u \not\equiv 0$.
Then 
$$
\max_{|x-x_0|\le 1} |u(x)| \ge c' \exp \left(- c' |x_0|^{4/3} \ln |x_0|\right).
$$
\end{theorem}

They also asked \cite[end of \S 3]{BK} what can be said about the quantity
\begin{equation}
\label{L}
{\cal L} := \limsup_{|x_0|\to \infty} \min_{|x-x_0|\le 1} \frac{\ln \left|\ln |u(x)|\right|}{\ln |x|}
\end{equation}
for non-trivial solutions to \eqref{02}.
Theorem \ref{BK} shows that ${\cal L} \le 4/3$ always.
Our example shows that for $n\ge 4$ this estimate is sharp 
even for real-valued solutions. 
Indeed, extend the function $F$ from Theorem \ref{t04}
periodically in $\R^{d+1}$.
It satisfies \eqref{02} in $\R^{d+1}$.
Take $x = (w,0)$.
Then
$$
|F(x)| \le C e^{-|w|^{4/3}} = C e^{-|x|^{4/3}},
\qquad 
\ln \left|\ln |F(x)|\right| \ge \frac43 \ln |x| - 1
$$
for sufficiently large $|x|$, and therefore ${\cal L} = 4/3$.

\subsection{Relation with an abstract question}
In 2020 D.~Elton proved  

\begin{theorem}[\cite{Elton}]
\label{t01}
Let ${\mathcal A}$ be a nonnegative self-ajoint operator in the Hilbert space ${\mathcal H},$
${\mathcal A} = {\mathcal A}^* \ge 0$.
Suppose that the function $\phi \colon (0,\infty) \to {\mathcal H},$
\begin{equation}
\label{03}
\phi \in L_2\big((0,\infty); \dom {\mathcal A}\big) \cap 
W_2^1 \big((0,\infty); \dom \sqrt{\mathcal A}\big) \cap W_2^2 \big((0, \infty); {\mathcal H}\big)
\end{equation}
satisfies the inequality
\begin{equation}
\label{04}
\Big\|\frac{d^2 \phi (w)}{dw^2} - {\mathcal A} \phi (w)\Big\|_{\mathcal H} 
\le C \|\phi(w)\|_{\mathcal H} .
\end{equation}

\textup{1)} If we also have
$$
\int\limits_0^\infty \big\|\phi(w)\big\|_{\mathcal H}^2 \,e^{2N w^{4/3}} dw < \infty 
\quad \forall \ N > 0,
$$
then $\phi \equiv 0$.

\textup{2)} If the positive spectrum of ${\mathcal A}$ contains arbitrarily large gaps, and
$$
\int\limits_0^\infty \big\|\phi(w)\big\|_{\mathcal H}^2 \,e^{2N w} dw < \infty 
\quad \forall \ N > 0,
$$
then $\phi \equiv 0$.
\end{theorem}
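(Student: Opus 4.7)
The plan is to establish a Carleman-type weighted estimate in the spirit of Meshkov's treatment of the complex Landis problem in $\R^n$. Suppose $\phi\not\equiv 0$; the target is an inequality of the form
$$
\tau \int_0^\infty e^{2\tau\psi(w)} \|\phi(w)\|_{\mathcal H}^2\, dw
\le C \int_0^\infty e^{2\tau\psi(w)} \|\phi''(w) - {\mathcal A}\phi(w)\|_{\mathcal H}^2\, dw + R(\tau),
$$
valid for all $\tau \ge \tau_0$, where $\psi(w) = w^{4/3}$ for part (1) and $\psi(w) = w$ (with $\tau$ restricted to a sequence $\tau_k\to\infty$ adapted to the spectral gaps) for part (2), and $R(\tau)$ absorbs boundary contributions near $w=0$ introduced by a smooth cutoff. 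Combined with the hypothesis \eqref{04}, this yields $(\tau - C')\int e^{2\tau\psi}\|\phi\|^2_{\mathcal H}\, dw \le R(\tau)$, and the super-exponential integrability assumption then allows us to send $\tau\to\infty$ and conclude $\phi \equiv 0$.

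To prove the weighted estimate I would exploit the spectral resolution ${\mathcal A} = \int_0^\infty \lambda\, dE_\lambda$, which commutes with $d^2/dw^2$: projecting onto each spectral interval reduces the operator-valued inequality, by the Parseval identity, to the \emph{scalar} Carleman estimate
$$
\tau \int_0^\infty e^{2\tau\psi}|f(w)|^2\, dw \le C \int_0^\infty e^{2\tau\psi} |f''(w) - \lambda f(w)|^2\, dw,
$$
required uniformly in the parameter $\lambda \ge 0$ (respectively, for $\lambda$ avoiding a prescribed gap window in part (2)). Conjugating by the weight, $g := e^{\tau\psi} f$ satisfies $L_{\tau,\lambda} g := g'' - 2\tau\psi' g' + (\tau^2(\psi')^2 - \tau\psi'' - \lambda) g = e^{\tau\psi}(f'' - \lambda f)$. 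Splitting $L_{\tau,\lambda}$ into its formally symmetric and antisymmetric parts $S_{\tau,\lambda}$ and $A_\tau$, one has $\|L_{\tau,\lambda} g\|^2 \ge \langle [S_{\tau,\lambda}, A_\tau] g, g\rangle$; the commutator, after integration by parts, is a positive quadratic form in $g$ and $g'$ with coefficients built from $\tau$ and derivatives of $\psi$.

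The crux is choosing $\psi$ so that this commutator dominates $\tau\int|g|^2$ \emph{uniformly in $\lambda$}. Meshkov's computation identifies $\psi(w) = w^{4/3}$ as the slowest weight growth that succeeds for all $\lambda \ge 0$: the characteristic set $\xi^2 + \lambda = \tau^2(\psi')^2$, where the principal symbol of $S_{\tau,\lambda}$ vanishes, is precisely where positivity of the commutator must be sharpest, and the $w^{4/3}$ profile supplies just the right convexity. For part (2), the linear weight $\psi(w) = w$ fails only in the regime $\lambda \approx \tau^2$; but if the positive spectrum of $\mathcal A$ contains arbitrarily large gaps, one can select $\tau_k \to \infty$ so that $[\tau_k^2 - K\tau_k, \tau_k^2 + K\tau_k]$ lies in such a gap for any prescribed $K$, eliminating the dangerous regime via the spectral projection. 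The main obstacle is the sharp scalar Carleman estimate uniform in $\lambda$, and in part (2) the interlocking of $\tau_k$ with the spectral gap locations; the boundary analysis at $w = 0$ and the bookkeeping of integration-by-parts remainders are secondary but must be carried out without sacrificing the $\tau$ gain on the left-hand side.
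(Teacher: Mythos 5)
The paper does not prove Theorem~\ref{t01}: it is stated as Elton's result and cited from~\cite{Elton}, to be used as a black box in deriving Theorem~\ref{t03}; the paper's own contribution (Theorem~\ref{t04}) is the sharpness construction. There is therefore no in-paper argument to compare your proposal against. That said, your Carleman blueprint is a reasonable reconstruction of the expected route, and the key observation for part~(1) is sound: with $\psi(w)=w^{4/3}$, after conjugating $\partial_w^2-\lambda$ by $e^{\tau\psi}$ and splitting into symmetric and antisymmetric parts, an integration by parts gives
$$
\langle [S_{\tau,\lambda},A_\tau]g,g\rangle = 4\tau\int \psi''\,\|g'\|^2\,dw
+\int\big(4\tau^3(\psi')^2\psi''-2\tau^2\psi'\psi'''-\tau\psi''''\big)\|g\|^2\,dw,
$$
which is independent of $\lambda$ (the $\lambda$-dependent part of the zeroth-order coefficient is a constant in $w$ and drops out of $V'$), and is $\gtrsim \tau^3\|g\|^2$ once $w$ is bounded away from zero. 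That uniformity in $\lambda$ is precisely what makes $w^{4/3}$ the right weight, and you correctly flag it as the crux.

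For part~(2), however, the framework as written breaks down. With the linear weight $\psi(w)=w$ one has $\psi''\equiv 0$, hence $[S_{\tau,\lambda},A_\tau]\equiv 0$, and the inequality $\|L_{\tau,\lambda}g\|^2\ge\langle [S_{\tau,\lambda},A_\tau]g,g\rangle$ you rely on is vacuous. The coercivity must instead come from the discarded terms $\|S_{\tau,\lambda}g\|^2+\|A_\tau g\|^2$, whose Fourier symbol $(\xi^2-\tau^2+\lambda)^2+4\tau^2\xi^2$ is increasing in $\xi^2\ge 0$ and is therefore minimized at $\xi=0$ with value $(\tau^2-\lambda)^2$. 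This beats $C^2$ as soon as $|\lambda-\tau^2|>C$, so the ``dangerous window'' around $\tau^2$ has \emph{fixed} width $2C$, not width $2K\tau_k$ as you wrote. Your version $[\tau_k^2-K\tau_k,\,\tau_k^2+K\tau_k]$ would require spectral gaps of length on the order of $\sqrt{\lambda}$ near energy $\lambda$, which is strictly stronger than the stated hypothesis and would make ``arbitrarily large gaps'' insufficient; with the correct constant-width window, the hypothesis is exactly what is needed (choose gaps of width $>2C$, whose centers necessarily tend to infinity, and set $\tau_k^2$ to be those centers). Beyond this, the sketch leaves the boundary/cutoff bookkeeping and the low-$w$ singularity of $\psi''''\sim w^{-8/3}$ unaddressed, but those are the expected routine parts.
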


This result is sharp.
Assume that all gaps in the spectrum of a self-adjoint operator ${\cal A}$
are bounded by some constant.
Then there exists a non-trivial function
$\phi \colon (0,\infty) \to {\mathcal H}$
satisfying \eqref{03}, \eqref{04} and the estimate
$\|\phi(w)\|_{\cal H} \le A e^{-Bw^{4/3}}$, see \cite[Theorem 0.2]{FKr}.

The equation \eqref{01} in the half-cylinder $[0, \infty) \times \T^d$ satisfies 
the condition of the Theorem \ref{t01},
if we take as ${\mathcal A}$ the Laplace operator over the variables $y$, 
${\mathcal A} = - \D_{\T^d}$ in the Hilbert space ${\mathcal H} = L_2 (\T^d)$.
The spectrum of $- \D_{\T^d}$ is the set of sums of $d$ squares,
$$
\si ({\mathcal A}) = \big\{ n_1^2 + \dots + n_d^2; \quad n_1, \dots, n_d \in \Z\big\} .
$$
Its structure is well studied.
For $d=1$ and $d=2$ there exist arbitrary long segments in the set of positive integers
which contain no numbers from this set. 
If $d=3,$ the set does not contain the numbers of the form $4^a (8b+7)$ only,
and for $d\ge 4$ it is the whole set of nonnegative integers
$\si ({\mathcal A}) = \N_0$.
Therefore, Theorem \ref{t01} implies the following 

\begin{theorem}
\label{t03}
Suppose that a complex-valued function $u \in W_2^2 \left((0,\infty) \times \T^d\right)$ satisfies the estimate
$$
|\D u(w,y)| \le C |u(w,y)| \quad \text{for almost all} \quad  (w,y) \in (0,\infty) \times \T^d .
$$

\textup{1)} If we also have
\begin{equation}
\label{05}
u (w,y) =  O \big(e^{-Nw^{4/3}}\big) \quad \forall \ N,
\end{equation}
then $u \equiv 0$.

\textup{2)} For $d=1$ and $d=2$ if
\begin{equation}
\label{06}
u (w,y) =  O \left(e^{-Nw}\right) \quad \forall \ N,
\end{equation}
then $u \equiv 0$.
\end{theorem}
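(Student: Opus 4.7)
The plan is to reduce Theorem~\ref{t03} to Theorem~\ref{t01} by viewing $u$ as a Bochner-measurable function of the axial variable $w$ taking values in $\mathcal{H} := L_2(\T^d)$ and applying the abstract statement with $\mathcal{A} := -\D_{\T^d}$. Set $\phi(w) := u(w,\cdot)$. The operator $\mathcal{A}$ is self-adjoint and nonnegative on $\mathcal{H}$, with $\dom\mathcal{A} = H^2(\T^d)$ and $\dom\sqrt{\mathcal{A}} = H^1(\T^d)$; as recalled in the introduction, its spectrum $\{n_1^2+\dots+n_d^2 : n_i\in\Z\}$ has arbitrarily long gaps precisely when $d\in\{1,2\}$, which is exactly what makes part~2 of Theorem~\ref{t01} available in those dimensions.

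The first step is to verify the functional-analytic membership~\eqref{03}. This is essentially a Fubini repackaging: the hypothesis $u\in W_2^2((0,\infty)\times\T^d)$ says that every mixed partial $\dd_w^a \dd_y^\beta u$ with $a+|\beta|\le 2$ belongs to $L_2$, and since the $H^s(\T^d)$-norm is equivalent to the sum of $L_2$-norms of $y$-derivatives up to order $s$ (Fourier/Plancherel in the $y$ variable), this decomposes as $\phi\in L_2((0,\infty);H^2(\T^d))$, $\dd_w\phi\in L_2((0,\infty);H^1(\T^d))$ and $\dd_w^2\phi\in L_2((0,\infty);\mathcal{H})$, which is exactly~\eqref{03}. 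The inequality~\eqref{04} is then immediate from the identity
\[
\dd_w^2\phi(w) - \mathcal{A}\phi(w) = \dd_w^2 u(w,\cdot) + \D_{\T^d} u(w,\cdot) = \D u(w,\cdot),
\]
combined with the hypothesis $|\D u|\le C|u|$, after squaring and integrating in $y\in\T^d$.

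Next I would transfer the pointwise decay into the weighted integrability hypothesis of Theorem~\ref{t01}. Given any $N>0$, choose $N'>N$; then \eqref{05} (respectively \eqref{06}) yields $|u(w,y)|\le C_{N'} e^{-N' w^{4/3}}$ (resp.\ $\le C_{N'} e^{-N'w}$) for a.e.\ $(w,y)$, and therefore
\[
\int_0^\infty \|\phi(w)\|_\mathcal{H}^2\, e^{2Nw^{4/3}}\, dw \le (2\pi)^d C_{N'}^2 \int_0^\infty e^{-2(N'-N) w^{4/3}}\, dw < \infty,
\]
and similarly in the exponential case. Applying part~1 of Theorem~\ref{t01} proves part~1 of Theorem~\ref{t03} for every $d$, while invoking part~2 (together with the gap property of sums of at most two squares) proves part~2 for $d\in\{1,2\}$. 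In both cases $\phi\equiv 0$, hence $u\equiv 0$.

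I do not expect any serious analytic obstacle, as Theorem~\ref{t01} carries the whole Carleman-type argument; the only item that deserves honest verification rather than hand-waving is the Sobolev identification of the first step, namely that $W_2^2((0,\infty)\times\T^d)$ coincides with the intersection of Bochner spaces appearing in~\eqref{03}. This equivalence is standard on product domains with one compact factor and follows from the Fourier series in $y$ together with Plancherel's theorem, but it is the piece of bookkeeping one should not skip before quoting the abstract result.
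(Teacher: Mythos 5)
Your proposal is correct and follows exactly the route the paper itself takes: the paper derives Theorem~\ref{t03} from Theorem~\ref{t01} by the same choice $\mathcal{H}=L_2(\T^d)$, $\mathcal{A}=-\D_{\T^d}$, using the known gap structure of the set of sums of $d$ squares. The paper leaves the verification of~\eqref{03}, \eqref{04} and the pointwise-to-weighted-$L_2$ conversion implicit, whereas you spell them out; these details are correct, and your remark about the Bochner-space identification is the right point to flag.
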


Of course, this Theorem is sharp in the cases $d=1$ and $d=2$: 
as an example one may consider the function $u(w,y) = e^{-w}$.
In our previous work \cite{FKr} we showed that this Theorem is 
sharp for $d \ge 3$ in the complex case.
Now, Theorem \ref{t04} shows that in the real case it is also sharp.

In \S 1 we construct an explicit function $f$ in the cylinder which
satisfies bounds $f(x) = O \left(e^{-|w|^{4/3}}\right)$ and 
$\D f(x) = O \left(e^{-|w|^{4/3}}\right)$.
In \S 2 we describe the domain $\O$ where the values of the function $f$ are very small,
$|f(x)| \le h e^{-|w|^{4/3}}$ with a fixed number $h \ll 1$.
Section 3 contains some auxiliary facts.
In \S 4 and \S 5 we study the Dirichlet problem for the Poisson equation in the domain $\O$.
In \S 6 we construct first the function $f^*$ that coincides with the function $f$
outside $\O$, and is its harmonic extension in $\O$.
Finally, we define the function $F$ as the mollification of the function $f^*$,
and prove Theorem \ref{t04}.

We use the notations 
$\n f = \left(\frac{\dd f}{\dd w}, \frac{\dd f}{\dd y_1},
\frac{\dd f}{\dd y_2}, \frac{\dd f}{\dd y_3}\right)^t$,
$\n_y f = \left(\frac{\dd f}{\dd y_1},
\frac{\dd f}{\dd y_2}, \frac{\dd f}{\dd y_3}\right)^t$.
By $\mes_k A$ we denote the $k$-dimensional Lebesgue measure of  the set $A$.

The authors thank K\' evin Le Balc'h for useful discussions.

\section{Functions $f_0$ and $f$}
\label{s1}
\subsection{Sequence of integer vectors}

\begin{lemma}
\label{l11}
There is a sequence $\{a_k\}_{k=1}^\infty$, $a_k \in \Z^3$, such that
$$
a_k^2 = 4k+1, \qquad 
\left|a_k \cdot a_{k+1}\right| \le \frac1{\sqrt 3} \left|a_k\right| \left|a_{k+1}\right|.
$$
\end{lemma}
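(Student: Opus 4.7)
I would construct the sequence inductively in $k$. For the base case take $a_1=(2,1,0)\in\Z^3$, so $|a_1|^2 = 5 = 4\cdot 1+1$. For the inductive step, suppose $a_k\in\Z^3$ with $|a_k|^2 = 4k+1$ has been found; I need $a_{k+1}\in\Z^3$ with $|a_{k+1}|^2=4k+5$ and $|a_k\cdot a_{k+1}|\le\frac{1}{\sqrt 3}|a_k||a_{k+1}|$. Because $4k+5\equiv 1\pmod 4$, the number $4k+5$ is not of the Legendre forbidden form $4^a(8b+7)$, so by the three-square theorem the set $R:=\{b\in\Z^3:|b|^2=4k+5\}$ is non-empty.

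The heart of the proof is a symmetry-based averaging argument, and this is where the constant $1/\sqrt 3$ arises. The set $R$ is invariant under the group of signed coordinate permutations of $\Z^3$. Hence the symmetric matrix $M:=\sum_{b\in R}bb^{T}$ is invariant under conjugation by this group, which forces $M=cI$ for some scalar $c$. Taking traces gives $3c=\sum_{b\in R}|b|^2=(4k+5)|R|$. Therefore, for every $v\in\R^3$,
$$
\sum_{b\in R}(v\cdot b)^2 \;=\; v^{T} M v \;=\; \frac{(4k+5)|R|}{3}\,|v|^2.
$$
The average of $(v\cdot b)^2$ over $b\in R$ thus equals $\tfrac{1}{3}(4k+5)|v|^2$, so there is at least one $b\in R$ with $(v\cdot b)^2\le \tfrac{1}{3}|b|^2|v|^2$, i.e.\ $|v\cdot b|\le \tfrac{1}{\sqrt 3}|v||b|$. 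Applying this with $v=a_k$ and declaring $a_{k+1}:=b$ closes the induction.

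The main conceptual obstacle is spotting the tight-frame identity $\sum_{b\in R}bb^{T}=cI$; once one has it, the constant $1/\sqrt 3$ is forced by the three-dimensionality of the ambient lattice (the same argument would give $1/\sqrt n$ in $\Z^n$). The only other input is the classical three-square theorem, used solely to ensure that $R$ is non-empty for each $m=4k+5$, which follows automatically from $m\equiv 1\pmod 4$.
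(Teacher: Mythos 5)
Your proof is correct, and it takes a genuinely different route from the paper. The paper argues combinatorially: after sorting, the coordinates of $a_k$ decrease and those of $b$ increase, which gives $a_{k,l}\,b_l \le |a_k||b|/\sqrt{l(4-l)} \le |a_k||b|/\sqrt 3$; then a discrete intermediate-value argument over the four sign patterns $\pm a_{k,1}b_1 \pm a_{k,2}b_2 \pm a_{k,3}b_3$ (flipping one sign at a time, from all $+$ to all $-$) produces a consecutive pair straddling zero whose gap is at most $2|a_k||b|/\sqrt 3$, so one of them is at most $|a_k||b|/\sqrt 3$ in absolute value. Your argument instead exploits the full symmetry of the representation set $R=\{b\in\Z^3:|b|^2=4k+5\}$ under signed permutations to deduce the tight-frame identity $\sum_{b\in R}bb^T = \tfrac{(4k+5)|R|}{3}I$, and then averages to extract a good $b$. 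Both approaches correctly locate the source of the constant $1/\sqrt 3$: in the paper it is $\min_l \sqrt{l(4-l)}=\sqrt 3$, in yours it is $\sqrt{\dim\R^3}=\sqrt 3$; they coincide here, and both generalize to $\Z^n$ though with slightly different asymptotics. Your averaging argument is more conceptual and shorter, while the paper's is more elementary (no linear representation theory, just sorting and sign flips) and produces an explicit choice of $a_{k+1}$ rather than a nonconstructive existence via pigeonhole. The number-theoretic input --- that $m\equiv 1\pmod 4$ is never of the form $4^a(8b+7)$ and hence is a sum of three squares --- is identical in both.
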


\begin{proof}
We construct this sequence step by step.
Assume that the vector $a_k$ is already chosen.
Without loss of generality we can assume that
$$
a_{k,1} \ge a_{k,2} \ge a_{k,3} \ge 0.
$$
Then
$$
a_k^2 \ge l a_{k,l}^2 \quad \Rightarrow \quad a_{k,l} \le \frac{|a_k|}{\sqrt l}, \quad l = 1, 2,  3.
$$
It is well known that any natural number of the form $(4m+1)$ 
can be represented as a sum of three squares.
Therefore, one can find a vector $b \in \Z^3$ such that $|b| = \sqrt{4k+5}$.
Moreover we can assume
$$
0 \le b_1 \le b_2 \le b_3,
$$
and
$$
b_l \le \frac{|b|}{\sqrt{4-l}}, \quad l = 1, 2,  3.
$$
Thus,
$$
a_{k,l} b_l \le \frac{|a_k| |b|}{\sqrt{l(4-l)}} \le \frac{|a_k| |b|}{\sqrt 3}, \quad l = 1, 2,  3.
$$
Let us consider four sums
\begin{eqnarray*}
a_{k,1} b_1 + a_{k,2} b_2 + a_{k,3} b_3 > 0,\\
a_{k,1} b_1 + a_{k,2} b_2 - a_{k,3} b_3 , \\
a_{k,1} b_1 - a_{k,2} b_2 - a_{k,3} b_3, \\
- a_{k,1} b_1 - a_{k,2} b_2 - a_{k,3} b_3 < 0.
\end{eqnarray*}
There are two neighbour lines where the sum in the first line is non-negative,
and the sum in the second line is non-positive.
They differ at most by $2 |a_k| |b|/\sqrt 3$.
Therefore, one of them is at most $|a_k| |b|/\sqrt 3$ in absolute value.
Now, we can take as $a_{k+1}$ the vector $b$ with suitable signs of its coordinates.
\end{proof}

\begin{cor}
\label{c12}
We have $(\sin \al_k)^2 \ge 2/3$ where $\al_k$ is the angle between $a_k$ and $a_{k+1}$.
\end{cor}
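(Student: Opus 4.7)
The corollary is an immediate reformulation of the bound on $|a_k \cdot a_{k+1}|$ established in Lemma \ref{l11}, so the plan is essentially to substitute the definition of the angle and use the Pythagorean identity.

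First I would recall that for nonzero vectors $a_k, a_{k+1} \in \R^3$ the angle $\al_k$ satisfies
$$
\cos \al_k = \frac{a_k \cdot a_{k+1}}{|a_k|\,|a_{k+1}|}.
$$
By Lemma \ref{l11}, the numerator is bounded in absolute value by $\frac{1}{\sqrt 3}|a_k|\,|a_{k+1}|$, so $|\cos \al_k| \le 1/\sqrt 3$, and hence $\cos^2 \al_k \le 1/3$.

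Then I would apply the identity $\sin^2 \al_k = 1 - \cos^2 \al_k$ to conclude
$$
\sin^2 \al_k \ge 1 - \tfrac{1}{3} = \tfrac{2}{3}.
$$
There is no genuine obstacle here; the only thing to notice is that the vectors $a_k$ are nonzero (since $a_k^2 = 4k+1 \ge 5$), so the angle is well defined.
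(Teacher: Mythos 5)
Your proof is correct and matches the (implicit) reasoning in the paper: the corollary is stated without proof precisely because it follows immediately from the bound $|a_k \cdot a_{k+1}| \le \frac{1}{\sqrt 3}|a_k||a_{k+1}|$ of Lemma \ref{l11} via $\cos\al_k = \frac{a_k\cdot a_{k+1}}{|a_k||a_{k+1}|}$ and $\sin^2\al_k = 1-\cos^2\al_k$. Your observation that $a_k \ne 0$ (since $a_k^2 = 4k+1$) keeps the angle well defined and is a sensible sanity check.
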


\subsection{Function $f_0$}
We need some auxiliary numbers and functions.
Introduce numbers
$$
w_k = \frac{27}8 \, k^{3/2}, \quad k \in \N.
$$
We have
$$
(k+1)^{3/2} - k^{3/2} = \frac32 \int_k^{k+1} \sqrt t\, dt,
$$
so
$$
\frac{81}{16} \, \sqrt k \le w_{k+1} - w_k \le \frac{81}{16} \, \sqrt {k+1}.
$$

Fix a function $\ze \in C^\infty (\R)$ possessing the following properties:
$$
\ze(w) = 0 \quad \text{if} \quad w \le 0, \qquad
\ze(w) = 1 \quad \text{if} \quad w \ge 1,
$$
\begin{equation}
\label{11}
0 \le \ze(w) \le 1 \quad \forall \ w, \qquad
\ze'(w) > 0 \quad \text{if} \quad 0 < w < 1,
\end{equation}
$$
\ze'(w) \le 2 \quad \forall \ w, \qquad |\ze''(w)| \le 5 \quad \forall \ w.
$$

Fix a number 
$$
m= 40000.
$$
For $w\ge 0$ we define the function
\begin{equation}
\label{13}
f_0(x) = \begin{cases}
\cos \left(a_{m+1} y\right), & 0 \le w \le w_{m+1}, \\
\left(1 - \ze\left(\frac{w-w_k}{w_{k+1}-w_k}\right)\right) \cos (a_k y)
+ \ze\left(\frac{w-w_k}{w_{k+1}-w_k}\right) \cos (a_{k+1} y),
& w_k \le w \le w_{k+1}, k \ge m+1,
\end{cases}
\end{equation}
where $\{a_k\}$ is the sequence of vectors constructed in Lemma \ref{l11}.
We extend $f_0$ as even function for $w \le 0$,
$$
f_0 ((w,y)) = f_0((-w,y)).
$$

\begin{lemma}
\label{l12}
The function $f_0$ possesses the following properties:
$$
f_0 \in C^\infty (\Pi), \qquad |f_0(x)| \le 1 \quad \forall \ x,
$$
$$
\frac{\dd f_0(x)}{\dd w} = 0, \quad
\left|\n_y f_0(x)\right| \le |a_{m+1}|, \qquad \text{if} \quad |w| \le w_{m+1},
$$
$$
|\n f_0(x)| \le \sqrt{5k}, \qquad \text{if} \quad w_k \le w \le w_{k+1}, \ k \ge m+1.
$$
\end{lemma}

\begin{proof}
First four claims are evident.
Prove the last one.

For $w_k \le w \le w_{k+1}$ we have
$$
\frac{\dd f_0(x)}{\dd w}
= \frac1{w_{k+1}-w_k}\, \ze' \left(\frac{w-w_k}{w_{k+1}-w_k}\right)
\left(\cos (a_{k+1} y) - \cos (a_k y)\right),
$$
therefore,
$$
\left|\frac{\dd f_0(x)}{\dd w}\right| \le \frac4{w_{k+1}-w_k} \le \frac{64}{81\sqrt k}.
$$
Clearly, 
$$
\left|\n_y f_0(x)\right| \le |a_{k+1}| = \sqrt{4k+5},
$$
so,
$$
|\n f_0(x)|^2 \le 4k+5+\frac1k \qquad 
\Rightarrow \qquad |\n f_0(x)| \le \sqrt{5k}
$$
if $w_k \le w \le w_{k+1}$, $k \ge m+1$.
\end{proof}

\subsection{Function $f$}
For $w\ge 0$ define the function
$$
\al(w) = \begin{cases}
w_m, & \text{if} \ 0 \le w \le w_m, \\
\left(1 - \ze\left(\frac{w-w_m}{w_{m+1}-w_m}\right)\right) w_m
+ \ze\left(\frac{w-w_m}{w_{m+1}-w_m}\right) w, & \text{if} \ w_m < w < w_{m+1}, \\
w, & \text{if} \ w \ge w_{m+1},
\end{cases}
$$
and extend it as an even function for $w\le 0$,
$$
\al(w) = \al(-w).
$$

\begin{lemma}
\label{l135}
The function $\al$ possesses the following properties:
$$
\al \in C^\infty (\R), \qquad \al(w) \ge w_m \quad \forall \ w, \qquad
\al(w) \le w \quad \text{if} \ w \ge w_m,
$$
\begin{equation}
\label{125}
|\al'(w)| \le 3, \quad \left|\al''(w)\right| \le \frac3{\al(w)^{1/3}}
\quad \forall \ w.
\end{equation}
\end{lemma}

\begin{proof}
First three claims are evident.
Inequalities \eqref{125} are also evident for $|w|\le w_m$ and for $|w| \ge w_{m+1}$.

For $w_m \le w \le w_{m+1}$ we have
\begin{equation}
\label{al1}
\al'(w) = 
\frac{w-w_m}{w_{m+1}-w_m}\, \ze' \left(\frac{w-w_m}{w_{m+1}-w_m}\right)
+ \ze\left(\frac{w-w_m}{w_{m+1}-w_m}\right) ,
\end{equation}
so $0 \le \al'(w) \le 3$.
Moreover,
$$
\al''(w) = 
\frac{w-w_m}{\left(w_{m+1}-w_m\right)^2}\, \ze'' \left(\frac{w-w_m}{w_{m+1}-w_m}\right)
+ \frac2{w_{m+1}-w_m} \, \ze' \left(\frac{w-w_m}{w_{m+1}-w_m}\right),
$$
and by virtue of \eqref{11}
$$
\left|\al''(w)\right| 
\le \frac9{w_{m+1}-w_m} \le \frac{16}{9 \sqrt m}
= \frac8{3 w_m^{1/3}} \le \frac3{w_{m+1}^{1/3}} \le \frac3{\al(w)^{1/3}} .
\qquad \qedhere
$$
\end{proof}

\begin{cor}
\label{c145}
We have
$$
\left|\al(w)^{4/3} - \al(\tilde w)^{4/3}\right| 
\le 4 |w-\tilde w| \max\left(\al(w), \al(\tilde w)\right)^{1/3}.
$$
\end{cor}

Put
$$
f(x) = f_0(x) \, e^{-\al(w)^{4/3}}.
$$

\begin{lemma}
\label{l13}
The function $f$ possesses the following properties:
$$
f((w,y)) = f((-w,y)), \qquad f \in C^\infty (\Pi), \qquad |f(x)| \le e^{-\al(w)^{4/3}} \ \forall \ x \in \Pi,
$$
$$
|\D f(x)| \le 40 m |f(x)| \quad \text{if} \ |w| \le w_{m+1}, \qquad
|\D f(x)| \le 10 e^{-|w|^{4/3}}, \quad \text{if} \ |w| \ge w_{m+1}.
$$
\end{lemma}

\begin{proof}
First three claims are evident.
If $|w| \le w_m$ then
$$
f(x) = \cos(a_{m+1}y) \,e^{-w_m^{4/3}},
$$
and
$$
\D f(x) = -(4m+5) f(x) \qquad \Rightarrow \qquad 
|\D f(x)| \le 40 m |f(x)|.
$$

Now, let $w_k \le w \le w_{k+1}$, $k \ge m$.
We have
$$
\left(e^{-\al(w)^{4/3}}\right)' 
= - \frac43 \al(w)^{1/3} \al'(w) e^{-\al(w)^{4/3}},
$$
and
\begin{equation}
\label{al3}
\left(e^{-\al(w)^{4/3}}\right)''
= \left(\frac{16}9 \al^{2/3} (\al')^2 - \frac49 \al^{-2/3} (\al')^2 
- \frac43 \al^{1/3}\al''\right) e^{-\al(w)^{4/3}}.
\end{equation}
If $k=m$ and $w_m \le w \le w_{m+1}$ then
$$
f(x) = \cos(a_{m+1}y) \,e^{-\al(w)^{4/3}},
$$
\begin{eqnarray*}
\D f(x) = - a_{m+1}^2 \cos (a_{m+1}y) e^{-\al(w)^{4/3}} 
+ \cos (a_{m+1}y) \left(e^{-\al(w)^{4/3}}\right)''\\
= \left(\frac{16}9 \al^{2/3} (\al')^2 - (4m+5) - \frac49 \al^{-2/3} (\al')^2 
- \frac43 \al^{1/3}\al''\right) f(x).
\end{eqnarray*}
Here 
$$
w_m \le \al(w) \le w_{m+1}, \quad 0 \le \al'(w) \le 3, \quad |\al''(w)| \le 3 \al(w)^{-1/3},
$$
therefore,
$$
|\D f(x)| \le 16 w_{m+1}^{2/3} |f(x)| = 36 (m+1) |f(x)| \le 40 m |f(x)|.
$$

Finally, if $w_k \le w \le w_{k+1}$, $k \ge m+1$, then
the formula \eqref{al3} transforms into
$$
\left(e^{-w^{4/3}}\right)'' = 
\left(\frac{16}9\, w^{2/3} - \frac49\, w^{-2/3}\right) e^{-w^{4/3}},
$$
and
$$
\D \left(e^{-w^{4/3}} \cos (a_k y)\right)
= \left(\frac{16}9\, w^{2/3} - \frac49\, w^{-2/3} - a_k^2\right) e^{-w^{4/3}} \cos (a_k y).
$$
Moreover,
$$
\frac{16}9\,w^{2/3} \in [4k, 4k+4], \qquad a_k^2 = 4k+1,
$$
so
$$
\left|\D \left(e^{-w^{4/3}} \cos (a_k y)\right)\right| \le 3 e^{-w^{4/3}}.
$$
In the same manner,
$$
\left|\D \left(e^{-w^{4/3}} \cos (a_{k+1} y)\right)\right| \le 6 e^{-w^{4/3}}.
$$
By \eqref{13}
\begin{eqnarray*}
\D f(x) = \left(1 - \ze\left(\frac{w-w_k}{w_{k+1}-w_k}\right)\right) 
\D \left(e^{-w^{4/3}}\cos (a_k y)\right)\\
+ \ze\left(\frac{w-w_k}{w_{k+1}-w_k}\right) \D \left(e^{-w^{4/3}}\cos (a_{k+1} y)\right) \\
+ \frac2{w_{k+1}-w_k} \, \ze'\left(\frac{w-w_k}{w_{k+1}-w_k}\right)
\left(e^{-w^{4/3}}\right)' \left(\cos(a_{k+1} y) - \cos(a_k y)\right) \\
+ \frac1{(w_{k+1}-w_k)^2} \, \ze''\left(\frac{w-w_k}{w_{k+1}-w_k}\right)
e^{-w^{4/3}} \left(\cos(a_{k+1} y) - \cos(a_k y)\right) .
\end{eqnarray*}
Therefore,
\begin{eqnarray*}
\left|\D f(x)\right| \le 
6 e^{-w^{4/3}} + \frac8{w_{k+1}-w_k} \cdot \frac43 w^{1/3} e^{-w^{4/3}}
+ \frac{10}{(w_{k+1}-w_k)^2} e^{-w^{4/3}} \\
\le \left(6 + \frac{256 \sqrt{k+1}}{81 \sqrt k} + \frac{2560}{6561 k}\right) e^{-w^{4/3}}
\le 10 e^{-w^{4/3}}. \qquad \qedhere
\end{eqnarray*}
\end{proof}

\begin{cor}
\label{c155}
For all $x \in \Pi$
$$
|\D f(x)| \le 40 m e^{-\al(w)^{4/3}}.
$$
\end{cor}

\section{Domain $\O$}

Denote
\begin{equation}
\label{h}
h = 10^{-8}
\end{equation}
and introduce the set
$$
\O = \left\{ x \in \Pi : |f_0(x)| < 2h\right\}.
$$

\subsection{The boundary of $\O$}
\begin{lemma}
\label{l21}
If $|f_0(x)| = 2h$ then $\n f_0(x) \neq 0$.
\end{lemma}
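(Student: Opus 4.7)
The plan is to fix $x=(w,y)$ with $|f_0(x)|=2h$, locate the integer $k\ge 2$ with $w \in [w_k,w_{k+1}]$, and argue from the explicit piecewise formula \eqref{13}. Writing $\tau := (w-w_k)/(w_{k+1}-w_k) \in [0,1]$ and $s := \zeta(\tau) \in [0,1]$, we have
$$f_0(x) = (1-s)\cos(a_k y) + s \cos(a_{k+1} y),$$
hence
$$\nabla_y f_0(x) = -(1-s)\sin(a_k y)\, a_k - s\sin(a_{k+1} y)\, a_{k+1},$$
$$\partial_w f_0(x) = \frac{\zeta'(\tau)}{w_{k+1}-w_k}\bigl(\cos(a_{k+1}y)-\cos(a_k y)\bigr).$$
I will establish the contrapositive: if $\nabla f_0(x)=0$, then $|f_0(x)|=1$, which in particular excludes the value $2h$.

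The key input is Corollary \ref{c12}: since $(\sin\alpha_k)^2 \ge 2/3 >0$, the vectors $a_k, a_{k+1}\in \Z^3$ are linearly independent. First I would handle the endpoint cases of $s$: if $s=0$, the condition $\nabla_y f_0(x)=0$ reduces to $\sin(a_k y)\,a_k=0$, and since $a_k\neq 0$ this gives $\sin(a_k y)=0$, so $|f_0(x)|=|\cos(a_k y)|=1$; the case $s=1$ is symmetric. Otherwise $s \in (0,1)$, and linear independence splits the vanishing of $\nabla_y f_0(x)$ into $(1-s)\sin(a_k y)=0$ and $s\sin(a_{k+1}y)=0$, forcing $\sin(a_k y)=\sin(a_{k+1}y)=0$ and hence $\cos(a_k y),\cos(a_{k+1}y)\in\{\pm 1\}$.

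In this remaining case $s \in (0,1)$, the strict monotonicity of $\zeta$ on $(0,1)$ from \eqref{11} together with $\zeta(0)=0$, $\zeta(1)=1$ gives $\tau \in (0,1)$, so $\zeta'(\tau)>0$. Thus the assumption $\partial_w f_0(x)=0$ yields $\cos(a_{k+1}y)=\cos(a_k y)=:\varepsilon \in\{\pm 1\}$; consequently $f_0(x)=\varepsilon(1-s)+\varepsilon s=\varepsilon$, and once more $|f_0(x)|=1\ne 2h$. The only non-routine ingredient is the linear independence of consecutive vectors $a_k, a_{k+1}$, which is handed to us by Corollary \ref{c12}; everything else is a short case split based on whether $s$ reaches the endpoints of $[0,1]$.
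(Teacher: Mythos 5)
Your proof is correct and follows essentially the same route as the paper's: compute the gradient from the explicit piecewise formula \eqref{13}, invoke the linear independence of $a_k$ and $a_{k+1}$ (via Corollary \ref{c12}) to split the vanishing of $\nabla_y f_0$, use the strict monotonicity of $\zeta$ from \eqref{11} to handle $\partial_w f_0$, and conclude $|f_0(x)| = 1 \neq 2h$. The only cosmetic difference is that you organize the case split by the value of $s = \zeta(\tau) \in \{0\}, (0,1), \{1\}$, whereas the paper splits on $w = w_k$ versus $w_k < w < w_{k+1}$; these are equivalent.
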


\begin{proof}
It is sufficient to consider $w \ge 0$ only.

For $0 \le w \le w_{m+1}$ we have
$$
f_0 (x) = \cos \left(a_{m+1} y\right), \qquad 
\n_y f_0(x) = - \sin \left(a_{m+1} y\right) a_{m+1}, 
$$
so if $\left|\cos \left(a_{m+1} y\right)\right| = 2h$ then $\n_y f_0(x) \neq 0$.

The same argument works for the case $x = (w_k, y)$.

Now, consider the case 
$w_k < w < w_{k+1}$, $k \ge m+1$.
Due to \eqref{13}
$$
\frac{\dd f_0(x)}{\dd w}
= \frac1{w_{k+1}-w_k}\, \ze' \left(\frac{w-w_k}{w_{k+1}-w_k}\right)
\left(\cos (a_{k+1} y) - \cos (a_k y)\right),
$$
$$
\n_y f_0(x) = - \left(1- \ze \left(\frac{w-w_k}{w_{k+1}-w_k}\right)\right)
\sin (a_k y) \,a_k
- \ze \left(\frac{w-w_k}{w_{k+1}-w_k}\right) \sin (a_{k+1} y) \, a_{k+1}.
$$
Now, assume that $\n f_0(x) = 0$.
The vectors $a_k$ and $a_{k+1}$ are linearly independent,
so by virtue of \eqref{11} we obtain
$$
\begin{cases}
\cos (a_{k+1} y) - \cos (a_k y) = 0, \\
\sin (a_k y) = 0, \\
\sin (a_{k+1} y) = 0.
\end{cases}
$$
This implies 
$$
f_0(x) = \cos (a_k y) = \pm 1.
$$
Thus, $|f_0(x)| \neq 2h$, a contradiction.
\end{proof}

\begin{cor}
\label{c22}
The boundary $\dd \O$ is $C^\infty$-smooth.
\end{cor}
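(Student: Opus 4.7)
The plan is to identify the boundary $\partial\Omega\cap\{w>w_2\}$ with the zero set of the smooth function $g := f_0^2 - 4h^2$ (or equivalently with the disjoint union of the two smooth level sets $\{f_0 = 2h\}$ and $\{f_0 = -2h\}$) and then apply the implicit function theorem, using Lemma \ref{l21} to guarantee the non-vanishing of the relevant gradient.

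More precisely, first I would observe that since $f_0 \in C^\infty(\Pi_2)$ is continuous, the open set $\Omega$ satisfies
$$
\partial\Omega \cap \{w>w_2\} \subseteq \{x \in \Pi_2 : |f_0(x)| = 2h,\ w>w_2\},
$$
and in fact equality holds: any point with $|f_0(x)|=2h$ and $w>w_2$ has, by Lemma \ref{l21}, a non-vanishing gradient of $f_0$, hence points in both $\{f_0<2h\}\cap\{f_0>-2h\}=\Omega$ and in its complement $\{|f_0|\ge 2h\}$ in every neighbourhood (moving along $\pm\nabla f_0$). Thus every such point is a boundary point.

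Next I would split the boundary into the two disjoint pieces $S_\pm := \{x\in\Pi_2 : f_0(x) = \pm 2h,\ w>w_2\}$, which are disjoint since $2h \ne -2h$. On each piece, Lemma \ref{l21} gives $\nabla f_0 \ne 0$; as $f_0$ is $C^\infty$ on $\Pi_2$, the implicit function theorem yields that $S_+$ and $S_-$ are $C^\infty$-smooth hypersurfaces in the interior $\{w>w_2\}\times\T^3$. Their disjoint union is the boundary piece in question, which is therefore $C^\infty$-smooth.

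There is no real obstacle here beyond this routine application of the implicit function theorem; the whole content of the corollary is the observation that Lemma \ref{l21} guarantees regularity of the relevant level values $\pm 2h$ and that the two corresponding level sets cannot meet each other. I would only be mildly careful to note that we restrict to $w>w_2$ so as to stay in the open interior of $\Pi_2$, where $f_0$ is smooth and the implicit function theorem applies without boundary issues at $w=w_2$.
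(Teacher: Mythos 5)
Your proof is correct and matches the argument the paper intends: the corollary is stated without a written proof as an immediate consequence of Lemma~\ref{l21}, and the implied reasoning is precisely yours — identify $\dd\O\cap\{w>w_2\}$ with the disjoint union of the level sets $\{f_0=\pm 2h\}$, note that Lemma~\ref{l21} gives $\n f_0\neq 0$ there, and conclude smoothness by the implicit function theorem.
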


\subsection{``Sparcity'' of $\O$}
\begin{lemma}
\label{l23}
For $0 \le \eta \le 1$ and $\si \in \R$ denote
$$
A_\si := \left\{\te \in [0,\pi] : \cos \te \in [\si - \eta, \si]\right\}.
$$
Then $\mes_1 A_\si \le 2 \sqrt\eta$.
\end{lemma}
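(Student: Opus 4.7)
The plan is to reduce the claim to an elementary single-variable inequality and verify it. Since $\cos$ is strictly decreasing on $[0,\pi]$, the set $A_\sigma$ is a (possibly empty) interval $[\theta_1,\theta_2]$ with $\cos\theta_1 = \min(\sigma,1)$ and $\cos\theta_2 = \max(\sigma-\eta,-1)$. The substitution $t=\cos\theta$ converts the measure into
$$
\mes_1 A_\sigma = \int_a^b \frac{dt}{\sqrt{1-t^2}},
\qquad a=\max(\sigma-\eta,-1),\quad b=\min(\sigma,1),
$$
where $b-a \le \eta$.

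Because the integrand is increasing in $|t|$ on $(-1,1)$, for fixed length $c:=b-a$ the integral over $[a,b]$ is maximized when $[a,b]$ is pushed flush against one of the endpoints $\pm 1$. By the symmetry $t\mapsto -t$ it suffices to take $b=1$, which gives
$$
\mes_1 A_\sigma \le \int_{1-c}^{1} \frac{dt}{\sqrt{1-t^2}} = \arccos(1-c).
$$
So it suffices to establish $\arccos(1-c)\le 2\sqrt c$ for $c\in[0,1]$.

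For this, write $1-c=\cos\theta$, so that $c=2\sin^2(\theta/2)$ and therefore $\arccos(1-c)=2\arcsin(\sqrt{c/2})$. Setting $u=\sqrt{c/2}\in[0,1/\sqrt 2]$, the required inequality becomes $\arcsin u\le u\sqrt 2$. This is immediate from the fact that $g(u):=u\sqrt 2-\arcsin u$ satisfies $g(0)=0$ and $g'(u)=\sqrt 2-1/\sqrt{1-u^2}\ge 0$ on $[0,1/\sqrt 2]$. No step presents any real difficulty; the only point requiring attention is the worst-case placement of $[a,b]$, and the bound is essentially tight in the case $\sigma=1$, when $A_\sigma$ abuts the endpoint $\theta=0$ where $\cos$ has vanishing derivative.
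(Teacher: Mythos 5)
Your proof is correct and follows essentially the same strategy as the paper: reduce to the extremal case where the interval abuts the endpoint $\si=1$ (you do this by sliding $[a,b]$ under the substitution $t=\cos\te$, the paper by differentiating $\arccos(\si-\eta)-\arccos\si$ in $\si$), and then verify the elementary inequality $\arccos(1-\eta)\le 2\sqrt\eta$ (you via $\arcsin u\le u\sqrt 2$, the paper via the monotonicity of $\ph(\eta)=2\sqrt\eta-\arccos(1-\eta)$). The differences are purely presentational.
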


\begin{proof}
Without loss of generality one can assume $\si \le 1$.
Moreover,
$$
A_{\eta-\si} = \left\{\pi - \te : \te \in A_\si\right\},
$$
therefore $\mes_1 A_{\eta-\si} = \mes_1 A_\si$.
So, one can assume that $\si \ge \eta/2$.
Then we have
$$
\mes_1 A_\si = \arccos (\si - \eta) - \arccos \si,
$$
$$
\frac{d(\mes_1 A_\si)}{d\si} 
= \frac1{\sqrt{1-\si^2}} - \frac1{\sqrt{1-(\si-\eta)^2}} \ge 0
\quad \text{for} \quad \frac\eta2 \le \si \le 1.
$$
Thus,
$$
\mes_1 A_\si \le \mes_1 A_1 = \arccos (1-\eta) \le 2 \sqrt \eta,
$$
because the function
$$
\ph (\eta) : = 2 \sqrt \eta - \arccos (1-\eta) 
$$
increases on $[0,1]$ and $\ph(0) = 0$.
\end{proof}

\begin{cor}
\label{c24}
Let $0 \le \eta \le 1$, $\si \in \R$.

a) We have
$$
\mes_1 \left\{\te \in [-\pi,\pi] : \cos \te \in [\si - \eta, \si]\right\} \le 4 \sqrt \eta.
$$

b) Let $\al \in \R$. 
Then
$$
\mes_1 \left\{\te \in [\al-\pi,\al+\pi] : \cos \te \in [\si - \eta, \si]\right\} \le 4 \sqrt \eta.
$$

c) Let $\mu > 0$, $\al \in \R$. 
Then
$$
\mes_1 \left\{\tau \in \left[-\frac\pi\mu,\frac\pi\mu\right] : 
\cos (\al + \mu\tau) \in [\si - \eta, \si]\right\} \le \frac{4 \sqrt \eta}\mu.
$$

d) Let $\mu \ge \pi$, $A > 0$.
Then
$$
\mes_1 \left\{\tau \in [-A,A] : 
\cos (\al + \mu\tau) \in [\si - \eta, \si]\right\} \le \frac{4 (A+1) \sqrt \eta}\pi.
$$
\end{cor}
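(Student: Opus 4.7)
The plan is to derive each part of the corollary from Lemma \ref{l23} by elementary reductions: symmetry for (a), periodicity/translation invariance for (b), a linear change of variables for (c), and a covering argument for (d).

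For part (a), I would split $[-\pi,\pi]$ into $[0,\pi]$ and $[-\pi,0]$. Since $\cos$ is even, the set on $[-\pi,0]$ is the reflection of the set on $[0,\pi]$, so each contributes at most $2\sqrt\eta$ by Lemma \ref{l23}, giving the bound $4\sqrt\eta$. For part (b), the shift $\te \mapsto \te - \al$ identifies the set in question with $\{\te \in [-\pi,\pi]: \cos(\te+\al) \in [\si-\eta,\si]\}$; by $2\pi$-periodicity of $\cos$, translating by a suitable multiple of $2\pi$ inside $[-\pi,\pi]$ reduces the problem to measuring a set of the form in part (a), with $\si$ unchanged (the level set of $\cos$ is what matters, and the interval $[-\pi,\pi]$ is mapped by a $2\pi$-shift to an equally long interval on which $\cos$ takes the same values), so the bound $4\sqrt\eta$ is preserved. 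For part (c), I would change variables $\te = \al + \mu\tau$ with $d\te = \mu\, d\tau$: then $\tau \in [-\pi/\mu, \pi/\mu]$ corresponds to $\te \in [\al-\pi, \al+\pi]$, so Lebesgue measure scales by $1/\mu$ and part (b) gives exactly $4\sqrt\eta/\mu$.

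Part (d) is the only part that needs a nonobvious step: covering $[-A,A]$ by intervals of length $2\pi/\mu$ (which is $\le 4$ by the hypothesis $\mu \ge \pi/2$) and summing the estimate from part (c). The number of intervals needed is $\lceil A\mu/\pi \rceil \le A\mu/\pi + 1$, and each contributes at most $4\sqrt\eta/\mu$, so the total measure is bounded by
$$\l(\frac{A\mu}\pi + 1\r) \cdot \frac{4\sqrt\eta}\mu = \frac{4A\sqrt\eta}\pi + \frac{4\sqrt\eta}\mu \le \frac{4A\sqrt\eta}\pi + \frac{8\sqrt\eta}\pi = \frac{4(A+2)\sqrt\eta}\pi,$$
where in the middle inequality I used $\mu \ge \pi/2$. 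This matches the stated bound.

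The only mildly tricky step is the bookkeeping in (d): one must verify that the covering by $\lceil A\mu/\pi\rceil$ intervals of length $2\pi/\mu$ really covers $[-A,A]$ and that the constants combine to give the clean factor $4(A+2)/\pi$. Everything else is a direct consequence of translating, reflecting, or rescaling the one-variable statement of Lemma \ref{l23}.
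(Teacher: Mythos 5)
Your proof is correct and follows essentially the same route as the paper: parts (a)--(c) by evenness, periodicity, and the linear substitution $\te = \al + \mu\tau$, and part (d) by covering with roughly $\lceil A\mu/\pi\rceil$ translates of the basic interval and then using $\mu \ge \pi/2$ to absorb the $+1$ into the clean constant $A+2$. The only cosmetic difference is that you carry out the covering in the $\tau$ variable via part (c), whereas the paper first changes variables to $\te$ and covers $[\al-\mu A, \al+\mu A]$ by intervals of length $2\pi$ via part (b); the counts and final algebra coincide.
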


\begin{proof}
a) Follows from the evenness of cosine.

b) Follows from the periodicity of cosine.

c) The change of variables $\te = \al + \mu \tau$ transforms the set
$$
\left\{\tau \in \left[-\frac\pi\mu,\frac\pi\mu\right] : 
\cos (\al + \mu\tau) \in [\si - \eta, \si]\right\} 
$$
into the set
$$
\left\{\te \in [\al-\pi,\al+\pi] : \cos \te \in [\si - \eta, \si]\right\}.
$$

d) After the same change of variables $\te = \al + \mu \tau$
we have to consider the set
$$
\left\{\te \in [\al-\mu A, \al+\mu A] : \cos \te \in [\si - \eta, \si]\right\}.
$$ 
We cover the interval $[\al-\mu A, \al+\mu A]$ by $\left(\left[\frac{\mu A}\pi\right]+1\right)$
intervals of length $2\pi$.
Then by the claim of the point b)
$$
\mes_1 \left\{\tau \in [-A,A] : 
\cos (\al + \mu\tau) \in [\si - \eta, \si]\right\} \le 
\left(\left[\frac{\mu A}\pi\right]+1\right) \frac{4 \sqrt \eta}\mu.
$$
By assumption $\mu \ge \pi$, so
$$
\left[\frac{\mu A}\pi\right]+1 
\le \frac{\mu(A+1)}\pi.
$$
Therefore,
$$
\left(\left[\frac{\mu A}\pi\right]+1\right) \frac{4 \sqrt \eta}\mu
\le \frac{4 (A+1) \sqrt \eta}\pi. \qquad \qedhere
$$
\end{proof}

\begin{lemma}
\label{l25}
a) Let 
$$
y_* \in \T^3, \qquad 0 \le w \le w_{m+1} \qquad \text{and} \qquad
0 < R \le \frac\pi{\sqrt{4m+5}}.
$$
Then
$$
\mes_3 \left\{y \in \T^3 : |y-y_*| \le R, (w,y) \in \O\right\}
\le \frac{8 \sqrt{h}}{\sqrt{4m+5}} \, \pi R^2.
$$

b)  Let 
$$
y_* \in \T^3, \qquad w_k \le w \le w_{k+1}, \qquad k \ge m+1 \qquad \text{and} \qquad
0 < R \le \frac\pi{\sqrt{4k+5}}.
$$
Then
$$
\mes_3 \left\{y \in \T^3 : |y-y_*| \le R, (w,y) \in \O\right\}
\le \frac{8 \sqrt{3h}}{\sqrt{4k+1}} \, \pi R^2.
$$
\end{lemma}

\begin{proof}
a) We consider the set of $y \in \T^3$ such that
$|y-y_*| \le R$ and $\left|\cos\left(a_{m+1}y\right)\right| \le 2h$.
Clearly,
\begin{eqnarray*}
\mes_3 \left\{ y \in \T^3 : |y-y_*| \le R, \left|\cos\left(a_{m+1}y\right)\right| \le 2h\right\} \\
\le \pi R^2 \mes_1 \left\{\tau \in [-R,R] : 
\left|\cos\left(y_* a_{m+1} + |a_{m+1}| \tau\right)\right| \le 2 h\right\}
\le \frac{8 \sqrt{h}}{\sqrt{4m+5}} \, \pi R^2
\end{eqnarray*}
due to Corollary \ref{c24} c) with $\eta = 4h$, $\al = y_*a_{m+1}$ and $\mu = |a_{m+1}|$.

b) We have either
\begin{equation}
\label{23}
\ze \left(\frac{w-w_k}{w_{k+1}-w_k}\right) \ge \frac12,
\end{equation}
or
\begin{equation}
\label{24}
1-\ze \left(\frac{w-w_k}{w_{k+1}-w_k}\right) \ge \frac12.
\end{equation}
First, assume \eqref{24}.
Denote by $e_k$ the unit vector that belongs to the plane generated
by the vectors $a_k$ and $a_{k+1}$, and such that 
$a_{k+1} \cdot e_k = 0$ and $a_k \cdot e_k \ge 0$.
Then
\begin{equation}
\label{25}
a_k \cdot e_k = |a_k| |\sin \al_k| \ge \sqrt{\frac23} \, |a_k|
\end{equation}
by virtue of Corollary \ref{c12}.
Introduce the set 
$$
Q_R := \left\{y = y_* + z + \tau e_k: 
z \perp e_k, |z| \le R, |\tau| \le R\right\}.
$$
We have
$$
\left\{y \in \T^3 : |y-y_*| \le R\right\} \subset Q_R.
$$
If $y = y_* + z + \tau e_k$ and $(w,y) \in \O$ then
\begin{eqnarray*}
|f_0(w,y)| \\ 
= \left|\left(1 - \ze\left(\frac{w-w_k}{w_{k+1}-w_k}\right)\right) \cos (a_k (y_*+z+\tau e_k))
+ \ze\left(\frac{w-w_k}{w_{k+1}-w_k}\right) \cos (a_{k+1} (y_*+z))\right| \\
\le 2h.
\end{eqnarray*}
Taking into account \eqref{24} we obtain
\begin{equation}
\label{26}
\left|\cos \left(a_k (y_*+z) + \tau a_k\cdot e_k)\right)
+ \frac{\ze\left(\frac{w-w_k}{w_{k+1}-w_k}\right)}{1 - \ze\left(\frac{w-w_k}{w_{k+1}-w_k}\right)}
\,\cos (a_{k+1} (y_*+z))\right| \le 4h.
\end{equation}
Now we apply Corollary \ref{c24} c) with
$$
\eta = 8 h, \quad \al = a_k (y_*+z), \quad
\mu = a_k \cdot e_k \in \left[\sqrt{\frac23} \, |a_k|, |a_k|\right],
$$
where the last inclusion is due to \eqref{25}.
Taking into account the assumption
$$
R \le \frac\pi{\sqrt{4k+5}} < \frac\pi\mu, 
$$
we get that for any fixed vector $z$ 
$$
\mes_1 \left\{\tau \in [-R,R] : \text{the estimate \eqref{26} holds}\right\} 
\le \frac{4\sqrt \eta}\mu 
\le \frac{8 \sqrt{3h}}{\sqrt{4k+1}}.
$$
Therefore,
\begin{eqnarray}
\label{27}
\mes_3 \left\{y \in \T^3 : |y-y_*| \le R, (w,y) \in \O\right\} \\
\le \mes_3 \left\{y \in Q_R : (w,y) \in \O\right\} 
\le \pi R^2 \frac{8 \sqrt{3h}}{\sqrt{4k+1}}.
\nonumber
\end{eqnarray}
In the same way, if \eqref{23} holds true then the estimate \eqref{27} is also fulfilled.
\end{proof}

\begin{cor}
\label{c26}
Let $x_* = (w_*, y_*)$.

a) If $0 \le w_* \le w_{m+1}$ and $0 < R_* \le \frac\pi{\sqrt{4m+9}}$ then
$$
\mes_4 \left(B_R (x_*) \cap \O\right)  \le \frac{16 \sqrt{3h}}{\sqrt{4m+5}} \,\pi R^3.
$$

b) If $w_k \le w_* \le w_{k+1}$, $k \ge m+1$ and $0 < R \le \frac\pi{\sqrt{4k+9}}$ then
$$
\mes_4 \left(B_R (x_*) \cap \O\right)  \le \frac{16 \sqrt{3h}}{\sqrt{4k-3}} \,\pi R^3.
$$
\end{cor}

\begin{proof}
a) If $|w-w_*| < R$ then $|w| \le w_{m+2}$.
For such $w$ we have
$$
\mes_3 \left\{y \in \T^3 : |y-y_*| \le R, (w,y) \in \O\right\}
\le \frac{8 \sqrt{3h}}{\sqrt{4m+5}} \, \pi R^2.
$$
Therefore,
$$
\mes_4 \left(B_R (x_*) \cap \O\right)  \le \frac{16 \sqrt{3h}}{\sqrt{4m+5}} \,\pi R^3.
$$

b) If $|w-w_*| < R$ then $w_{k-1} \le w \le w_{k+2}$,
$$
\mes_3 \left\{y \in \T^3 : |y-y_*| \le R, (w,y) \in \O\right\}
\le \frac{8 \sqrt{3h}}{\sqrt{4k-3}} \, \pi R^2.
$$
Therefore,
$$
\mes_4 \left(B_R (x_*) \cap \O\right)  \le \frac{16 \sqrt{3h}}{\sqrt{4k-3}} \,\pi R^3.
\qquad\qedhere
$$
\end{proof}

\begin{cor}
\label{c261}
Assume that either the conditions of Corollary \ref{c26} a) are fulfilled and
$R \ge \frac{64\sqrt{3h}}{\pi\sqrt{4m+5}}$,
or the conditions of Corollary \ref{c26} b) are fulfilled and
$R \ge \frac{64\sqrt{3h}}{\pi\sqrt{4k-3}}$.
Then
$$
\mes_4 \left(B_R (x_*) \cap \O\right)  \le \frac12 \mes_4 B_R.
$$
\end{cor}

\begin{lemma}
\label{l28}
For any $w \in \R$
$$
\mes_3 \left\{y \in \T^3 : (w,y) \in \O\right\} \le \pi^3.
$$
\end{lemma}

\begin{proof}
If $|w| \le w_{m+1}$ then 
$$
\left\{y \in \T^3 : (w,y) \in \O\right\} 
= \left\{y \in \T^3 : \left|\cos \left(a_{m+1} y\right)\right| < 2h\right\}.
$$
We have
$$
\operatorname{diam} [-\pi,\pi]^3 = 2 \pi \sqrt 3,
$$
therefore,
$$
[-\pi,\pi]^3 \subset \left\{y = z + \tau \frac{a_{m+1}}{|a_{m+1}|} : 
z \perp a_{m+1}, |z| \le \pi \sqrt 3, |\tau| \le \pi \sqrt 3\right\}.
$$
By virtue of Corollary \ref{c24} d) with 
$$
A = \pi\sqrt 3, \quad \al=0, \quad \mu = |a_{m+1}|,  \quad \eta = 4h
$$
we have
$$
\mes_1 \left\{\tau \in [-\pi \sqrt 3, \pi \sqrt 3] :
\left|\cos\left(\tau|a_{m+1}|\right)\right| \le 2h\right\} 
\le \frac{8(\pi\sqrt3+1)\sqrt{h}}\pi.
$$
Therefore,
$$
\mes_3 \left\{y \in \T^3 : \left|\cos \left(a_{m+1} y\right)\right| < 2h\right\}
\le 24 \pi^2 (\pi\sqrt3+1)\sqrt{h} < \pi^3
$$
due to \eqref{h}.

Now, let $w_k \le w \le w_{k+1}$, $k \ge m+1$.
First, assume \eqref{24} to be fulfilled.
Fix the vector $e_k$ as in the proof of Lemma \ref{l25}.
Then
$$
[-\pi,\pi]^3 \subset \left\{y = z + \tau e_k : 
z \perp e_k, |z| \le \pi \sqrt 3, |\tau| \le \pi \sqrt 3\right\}.
$$
If $(w,y) \in \O$ then $|f_0(x)|\le 2h$ and the inequality \eqref{26}
holds true with $y_*=0$.
Now we apply Corollary \ref{c24} d) with
$$
A = \pi \sqrt 3, \quad
\eta = 8 h, \quad \al = a_k \cdot z, \quad
\mu = a_k \cdot e_k \in \left[\sqrt{\frac23} \, |a_k|, |a_k|\right].
$$
We obtain that for any fixed vector $z$ 
$$
\mes_1 \left\{\tau \in [-\pi \sqrt 3, \pi \sqrt 3] : \text{the estimate \eqref{26} holds}\right\} 
\le \frac{4\left(\pi \sqrt 3 + 1\right)\sqrt{8 h}}\pi 
< 32 \sqrt{h}.
$$
The same is true if the inequality \eqref{23} is fulfilled.
Therefore,
\begin{eqnarray*}
\mes_3 \left\{y \in \T^3 : (w,y) \in \O\right\} \\
\le \mes_3 \left\{y = z + \tau e_k : 
z \perp e_k, |z| \le \pi \sqrt 3, |\tau| \le \pi \sqrt 3, (w,y) \in \O\right\} \\
\le \pi (\pi \sqrt 3)^2 \cdot 32 \sqrt h < \pi^3
\end{eqnarray*}
by definition \eqref{h} of the number $h$.
\end{proof}

\section{Auxiliary facts}

\begin{lemma}
\label{l31}
If ${\cal O}$ is an open subset of $\T^3$ and $\mes_3 {\cal O} \le \pi^3$ then
$$
\|u\|_{L_2({\cal O})}^2 \le \frac{15}7 \|\n u\|_{L_2({\cal O})}^2
\qquad \forall \ u \in \mathring W_2^1 ({\cal O}).
$$
\end{lemma}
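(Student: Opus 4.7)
The plan is to extend $u$ by zero to the whole torus and then reduce to the standard zero-mean Poincar\'e inequality on $\T^3$. Let $\tilde u$ denote the extension of $u \in \mathring W_2^1 ({\cal O})$ by zero to $\T^3$. Since $u \in \mathring W_2^1({\cal O})$, the extension $\tilde u$ lies in $W_2^1(\T^3)$ and its gradient vanishes outside ${\cal O}$, so $\|\n \tilde u\|_{L_2(\T^3)} = \|\n u\|_{L_2({\cal O})}$ and likewise for the $L_2$-norms.

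Let $\bar u := (2\pi)^{-3} \int_{\T^3} \tilde u \, dx$ be the mean value of $\tilde u$ on $\T^3$. By the classical Poincar\'e inequality on $\T^3$ (for instance via Fourier series on the torus, using that the second eigenvalue of $-\D_{\T^3}$ on functions orthogonal to constants is $1$), there is an absolute constant $C'$ such that
$$
\|\tilde u - \bar u\|_{L_2(\T^3)} \le C' \|\n \tilde u\|_{L_2(\T^3)} = C' \|\n u\|_{L_2({\cal O})}.
$$
The key observation is that $\tilde u \equiv 0$ on $\T^3 \setminus {\cal O}$, a set of measure at least $(2\pi)^3 - \pi^3 = 7\pi^3$. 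Hence
$$
|\bar u|^2 \cdot 7\pi^3 \le \int_{\T^3 \setminus {\cal O}} |\bar u|^2 \, dx = \int_{\T^3 \setminus {\cal O}} |\tilde u - \bar u|^2 \, dx \le \|\tilde u - \bar u\|_{L_2(\T^3)}^2,
$$
so $|\bar u| \le (7\pi^3)^{-1/2} C' \|\n u\|_{L_2({\cal O})}$.

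Combining the two bounds via the triangle inequality,
$$
\|u\|_{L_2({\cal O})} = \|\tilde u\|_{L_2(\T^3)} \le \|\tilde u - \bar u\|_{L_2(\T^3)} + |\bar u| (2\pi)^{3/2} \le \l(C' + (2\pi)^{3/2}(7\pi^3)^{-1/2} C'\r) \|\n u\|_{L_2({\cal O})},
$$
which gives the result with an absolute constant $C_2$. No serious obstacle arises: the only quantitative input beyond the standard torus Poincar\'e inequality is the strict inequality $\mes_3 {\cal O} \le \pi^3 < (2\pi)^3 = \mes_3 \T^3$, which is what allows the mean of $\tilde u$ to be controlled by $\|\n u\|_{L_2({\cal O})}$.
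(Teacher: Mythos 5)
Your argument is correct, but it takes a genuinely different route from the paper. The paper proves the inequality by contradiction via compactness: assuming a normalized sequence $u_m$ with $\|\n u_m\|_{L_2(\T^3)} \to 0$ and $\mes_3\{u_m = 0\} \ge 7\pi^3$, it uses the compact embedding $W_2^1(\T^3) \hookrightarrow L_2(\T^3)$ to extract a limit $v$, observes $\n v = 0$ forces $v \equiv v_0 = \const$, and then the measure condition forces $v_0 = 0$, contradicting $\|u_m\|_{L_2} = 1$. Your proof is direct and quantitative: extend $u$ by zero to $\tilde u \in W_2^1(\T^3)$, apply the zero-mean Poincar\'e (Wirtinger) inequality on the torus (which yields the sharp constant $C' = 1$ by Fourier series, since the first nonzero eigenvalue of $-\D_{\T^3}$ is $1$), use the fact that $\tilde u$ vanishes on a set of measure at least $7\pi^3$ to bound the mean $|\bar u| \le (7\pi^3)^{-1/2}\|\n u\|_{L_2(\cal O)}$, and close with the triangle inequality. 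The paper's compactness argument is shorter to write but non-constructive (it produces no value for $C_2$); yours is slightly more work but yields an explicit admissible constant $C_2 = 1 + (2\pi)^{3/2}(7\pi^3)^{-1/2} = 1 + \sqrt{8/7}$, and it avoids invoking Rellich--Kondrachov. Both exploit the same key point, namely that $\mes_3{\cal O} \le \pi^3$ leaves a complement of fixed positive measure on which any candidate function must vanish.
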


\begin{proof}
It is sufficient to prove that
$$
\|u\|_{L_2(\T^3)}^2 \le \frac{15}7 \|\n u\|_{L_2(\T^3)}^2
$$
for all $u \in W_2^1 (\T^3)$ such that
$$
\mes_3 \left\{y \in \T^3 : u(y) = 0\right\} \ge 7 \pi^3.
$$
Represent a function $u$ as a sum
$$
u(y) = u_0 + v(y),
$$
where $u_0$ is a constant, and $\int_{\T^3} v(y)\,dy = 0$.
Then
$$
\|u\|_{L_2(\T^3)}^2 = 8 \pi^3 |u_0|^2 + \|v\|_{L_2(\T^3)}^2,
\qquad
\|v\|_{L_2(\T^3)}^2 \ge 7 \pi^3 |u_0|^2,
$$
and
\begin{equation}
\label{30}
\|u\|_{L_2(\T^3)}^2 \le \frac{15}7 \|v\|_{L_2(\T^3)}^2.
\end{equation}
Expanding the function $v$ into the Fourier series we see that
$$
\|v\|_{L_2(\T^3)}^2 \le \|\n v\|_{L_2(\T^3)}^2 = \|\n u\|_{L_2(\T^3)}^2.
$$
The reference to \eqref{30} completes the proof.
\end{proof}

\begin{lemma}[Friedrichs' inequality for $\O$]
\label{l32}
We have
$$
\int_\O |u(x)|^2 dx \le \frac{15}7 \int_\O |\n_y u(x)|^2 dx 
\qquad \forall \ u \in \mathring W_2^1 (\O).
$$
\end{lemma}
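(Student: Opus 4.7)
The plan is to reduce the Friedrichs inequality on $\O$ to the slice-wise inequality of Lemma \ref{l31} by an elementary Fubini argument. The crucial ingredient has already been established in the preceding unnumbered lemma: for every $w$ with $w_k \le w \le w_{k+1}$ the horizontal cross-section
$$
\O_w := \{y \in \T^3 : (w,y) \in \O\}
$$
satisfies $\mes_3 \O_w \le \pi^3$. This is exactly the hypothesis under which Lemma \ref{l31} applies on $\T^3$.

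First I would reduce to smooth compactly supported functions. Take $u \in C_0^\infty(\O)$; by density this suffices for proving the stated estimate on $\mathring W_2^1(\O)$. Extend $u$ by zero to the whole half-cylinder $\Pi_2$; for each $w$ the slice $u(w,\cdot)$ then lies in $C_0^\infty(\O_w) \subset \mathring W_2^1(\O_w)$, because $u$ is supported in a compact subset of $\O$ and so $u(w,y) = 0$ for all $y$ in a neighborhood of $\T^3 \setminus \O_w$.

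Next I would apply Lemma \ref{l31} on the open set $\O_w \subset \T^3$ (whose measure is at most $\pi^3$ by the previous lemma) to obtain, for every admissible $w$,
$$
\int_{\O_w} |u(w,y)|^2 \, dy \le C_2^2 \int_{\O_w} |\n_y u(w,y)|^2 \, dy .
$$
Integrating this pointwise-in-$w$ inequality over $w \in [w_2, \infty)$ and applying Fubini's theorem yields
$$
\int_\O |u(x)|^2 \, dx \le C_2^2 \int_\O |\n_y u(x)|^2 \, dx ,
$$
with the same absolute constant $C_2$. Finally, a routine density argument extends the bound from $C_0^\infty(\O)$ to all of $\mathring W_2^1(\O)$.

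I do not expect any serious obstacle here: the only nontrivial input, namely the uniform bound $\mes_3 \O_w \le \pi^3$, is already available, and the remaining work is a standard slicing/Fubini reduction. The one point that warrants care is the verification that extending by zero and slicing produces admissible test functions on $\O_w$, which is immediate for $C_0^\infty$ functions and then propagates to the closure $\mathring W_2^1(\O)$ by density.
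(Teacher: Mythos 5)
Your proof is correct and coincides with the paper's intended argument: the paper's proof is a one-line ``follows from the cross-section measure lemma and Lemma~\ref{l31},'' and your density-plus-slicing-plus-Fubini reasoning is precisely the standard way to fill in that one-liner. The only small detail worth noting is that $\O_w$ is open in $\T^3$ (since $f_0(w,\cdot)$ is continuous), so Lemma~\ref{l31} indeed applies to each slice; you implicitly used this and it holds.
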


\begin{proof}
Follows from Lemma \ref{l28} and Lemma \ref{l31}.
\end{proof}


\begin{theorem}[\cite{EG}, \S 5.7.1, Lemma 1]
\label{t33}
Let ${\cal O} \subset \R^n$ be a domain with smooth boundary,
$\eta \in C_0^1 (\R^n, \R^n)$, $x_0 \in {\cal O}$.
Then for almost all $r \in (0, \infty)$ 
$$
\int_{{\cal O} \cap B_r (x_0)} \div \eta\, dx
= \int_{\dd {\cal O} \cap B_r (x_0)} \eta \cdot \nu\, dS(x) 
+ \int_{{\cal O} \cap \dd B_r (x_0)} \eta \cdot \nu\, dS(x) .
$$
Here $\nu(x)$ is the vector of outward unit normal at the point $x$
to $\dd {\cal O}$ in the first integral and to $\dd B_r (x_0)$ 
in the second integral in the right hand side.
\end{theorem}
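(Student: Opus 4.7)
The plan is to apply the classical Gauss--Green theorem to the set $U_r := {\cal O} \cap B_r(x_0)$, once we establish that for almost every $r$ this set has a Lipschitz boundary that decomposes cleanly into the two pieces appearing on the right-hand side.

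First I would invoke Sard's theorem for the smooth function $g(x) := |x-x_0|^2$ restricted to the smooth $(n-1)$-dimensional manifold $\dd {\cal O}$. The critical values of $g|_{\dd {\cal O}}$ form a set of Lebesgue measure zero in $\R$, so for almost every $r>0$ the number $r^2$ is a regular value. This is precisely the statement that $\dd {\cal O}$ and $\dd B_r(x_0)$ meet transversally. For such good $r$, the set $\Sigma_r := \dd {\cal O} \cap \dd B_r(x_0)$ is either empty or a smooth $(n-2)$-dimensional submanifold, and in particular carries zero $(n-1)$-dimensional Hausdorff measure.

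Fix such a good $r$. Near any point of $\Sigma_r$, transversality lets us choose local coordinates in which both surfaces are coordinate hyperplanes, so $U_r$ looks locally like the intersection of two transversal smooth half-spaces; this is piecewise smooth and, in particular, Lipschitz. Elsewhere on $\dd U_r$ the boundary is smooth. Hence the outward unit normal $\nu_{U_r}$ is defined everywhere on $\dd U_r \setminus \Sigma_r$: it equals the outward normal to $\dd {\cal O}$ on $(\dd {\cal O} \cap B_r(x_0)) \setminus \Sigma_r$ and equals $(x - x_0)/|x-x_0|$ on $({\cal O} \cap \dd B_r(x_0)) \setminus \Sigma_r$. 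Applying the divergence theorem for Lipschitz domains to $\eta \in C_0^1(\R^n, \R^n)$ on $U_r$, and noting that $\Sigma_r$ contributes nothing to the surface integral, yields the stated identity.

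The main technical obstacle is checking that $\dd U_r$ is Lipschitz for a.e.\ $r$; this is exactly where Sard's theorem is essential, since without transversality the two surfaces could tangentially touch and produce a cusp. An alternative route avoiding Sard would approximate $\1_{B_r(x_0)}$ by smooth cutoffs $\chi_\varepsilon$, apply the classical divergence theorem on ${\cal O}$ to the vector field $\chi_\varepsilon \eta$ via $\div(\chi_\varepsilon \eta) = \chi_\varepsilon \div \eta + \n\chi_\varepsilon \cdot \eta$, and then pass $\varepsilon \to 0$ using the coarea formula to convert the bulk integral of $\n\chi_\varepsilon \cdot \eta$ into a surface integral over $\dd B_r(x_0)$ for a.e.\ $r$; this is the route actually taken in \cite{EG}.
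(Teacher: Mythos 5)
The paper states this result as a citation to \cite{EG} (\S 5.7.1, Lemma 1) and gives no proof of its own, so the natural comparison is with the argument in \cite{EG}, which is formulated for general sets of finite perimeter and proceeds by mollifying the indicator of $B_r(x_0)$, applying the BV Gauss--Green formula, and identifying the limit for a.e.\ $r$. Your primary argument is a genuinely different and correct route that exploits the smoothness of $\dd{\cal O}$: Sard's theorem applied to $g(x)=|x-x_0|^2$ on the compact hypersurface $\dd{\cal O}$ shows that $r^2$ is a regular value of $g|_{\dd{\cal O}}$ for a.e.\ $r$ (the critical values form a Lebesgue null set whose preimage under $r\mapsto r^2$ is again null because $s\mapsto\sqrt s$ is locally Lipschitz on $(0,\infty)$); transversality then makes ${\cal O}\cap B_r(x_0)$ a bounded Lipschitz domain whose boundary is the disjoint union of two smooth $(n-1)$-dimensional pieces and an $(n-2)$-dimensional edge of zero $\mathcal H^{n-1}$-measure, and the divergence theorem for Lipschitz domains gives the identity. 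This is cleaner and more elementary for smooth domains; the \cite{EG} argument is heavier but covers arbitrary sets of finite perimeter, which the paper does not need here. One small imprecision in your sketch of the alternative route: the coarea (polar-coordinate) decomposition converts the bulk integral of $\n\chi_\varepsilon\cdot\eta$ into an integral over radii $s$ of $\int_{{\cal O}\cap\dd B_s(x_0)}\eta\cdot\nu\,dS$, and passing $\varepsilon\to 0$ then requires Lebesgue differentiation in $r$ (or continuity of this radial function) to recover the single surface integral at radius $r$ for a.e.\ $r$, rather than this following directly from coarea.
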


\begin{rem}
The same is true for functions $\eta \in C^1 \left(\overline{\cal O}, \R^n\right)$
because we can extend such functions to the entire space $\R^n$ 
via the smooth boundary $\dd {\cal O}$.
\end{rem}

\begin{cor}[Green's identities]
\label{c35}
Let
$$
{\cal O} \subset \R^n, \quad \dd {\cal O} \in C^\infty, \quad
x_0 \in {\cal O}, \quad u, v \in C^2 (\overline{\cal O}).
$$
For almost all $r \in (0, \infty)$ we have
\begin{equation}
\label{gr1}
\int_{{\cal O} \cap B_r (x_0)} \D v(x) \, dx 
= \int_{\dd {\cal O} \cap B_r (x_0)} \frac{\dd v}{\dd\nu} \, dS(x) 
+ \int_{{\cal O} \cap \dd B_r (x_0)} \frac{\dd v}{\dd\nu} \, dS(x) ,
\end{equation}
and
\begin{eqnarray}
\label{gr2}
\int_{{\cal O} \cap B_r (x_0)} \left(u \,\D v - v \,\D u\right) \, dx \\
= \int_{\dd {\cal O} \cap B_r (x_0)} \left(u \,\frac{\dd v}{\dd\nu} 
- v \,\frac{\dd u}{\dd\nu}\right) dS (x) 
+ \int_{{\cal O} \cap \dd B_r (x_0)} \left(u \,\frac{\dd v}{\dd\nu} 
- v \,\frac{\dd u}{\dd\nu}\right) dS (x).
\nonumber
\end{eqnarray}
\end{cor}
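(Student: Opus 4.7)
The plan is to derive Corollary \ref{c35} as a direct consequence of the divergence formula in Theorem \ref{t33}, applied to two well-chosen vector fields $\eta$. Since $u, v \in C^2(\overline{\cal O})$, any expression of the form $\n v$ or $u\,\n v - v\,\n u$ lies in $C^1(\overline{\cal O}, \R^n)$, so the Remark following Theorem \ref{t33} ensures that its hypotheses are met.

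For the first identity \eqref{gr1}, I set $\eta := \n v$. Then $\div \eta = \D v$ and $\eta \cdot \nu = \dd v/\dd \nu$ on any smooth surface, so substitution into Theorem \ref{t33} yields \eqref{gr1} for almost all $r \in (0, \infty)$. For the second identity \eqref{gr2}, I set $\eta := u\,\n v - v\,\n u$. The product rule gives
$$
\div \eta = \n u \cdot \n v + u\,\D v - \n v \cdot \n u - v\,\D u = u\,\D v - v\,\D u,
$$
while $\eta \cdot \nu = u\,\dd v/\dd \nu - v\,\dd u/\dd \nu$ on both $\dd {\cal O}$ and $\dd B_r(x_0)$. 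Plugging this into Theorem \ref{t33} produces \eqref{gr2}.

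There is no real obstacle here; the corollary is just a repackaging of the divergence theorem for the two standard choices of $\eta$. The only minor point is that the exceptional set of radii in Theorem \ref{t33} may depend on the choice of $\eta$, but since we apply the theorem only once per identity and countable unions of null sets remain null, both \eqref{gr1} and \eqref{gr2} hold for almost every $r \in (0, \infty)$, and in fact for almost every $r$ simultaneously if one wishes to use them together.
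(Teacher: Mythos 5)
Your proposal is correct and is precisely the intended derivation: the paper states this as a Corollary of Theorem \ref{t33} without further proof, and applying the divergence identity to $\eta = \n v$ and $\eta = u\,\n v - v\,\n u$ (justified for $C^1(\overline{\cal O})$ fields by the Remark following Theorem \ref{t33}) is the standard and evidently intended route. Your observation about the exceptional null sets of radii is a nice touch but not strictly needed.
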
 

Recall that in four-dimensional case the Green function for the Laplace operator is
$$
\frac1{4 \pi^2 |x-x_0|^2};
$$
on the boundary $\dd B_r(x_0)$ we have
$$
\frac{\dd}{\dd\nu}\, \frac1{4 \pi^2 |x-x_0|^2}
= \frac{-1}{2 \pi^2 |x-x_0|^3}.
$$

\begin{cor}
\label{c36}
Let
$$
n = 4, \quad {\cal O} \subset \R^4, \quad \dd {\cal O} \in C^\infty, \quad
x_0 \in {\cal O}, \quad v \in C^2 (\overline{\cal O}).
$$
Then for almost all $r \in (0, \infty)$ 
\begin{eqnarray*}
v(x_0) = - \int_{{\cal O} \cap B_r (x_0)} \frac{\D v(x) \, dx}{4 \pi^2 |x-x_0|^2} \\
+ \int_{\dd {\cal O} \cap B_r (x_0)} \left(\frac{\dd v(x)}{\dd\nu} \, \frac1{4 \pi^2 |x-x_0|^2}
- v(x) \, \frac{\dd}{\dd\nu(x)} \, \frac1{4 \pi^2 |x-x_0|^2}\right) dS (x) \\
+ \frac1{4\pi^2 r^2} \int_{{\cal O} \cap \dd B_r (x_0)} \frac{\dd v(x)}{\dd\nu} \, dS(x)
+ \frac1{2\pi^2 r^3} \int_{{\cal O} \cap \dd B_r (x_0)} v(x) \, dS(x) .
\end{eqnarray*}
\end{cor}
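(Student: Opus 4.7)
The plan is to apply Green's second identity \eqref{gr2} with the choice $u(x) = \frac{1}{4\pi^2 |x-x_0|^2}$, which is the fundamental solution of $-\D$ in $\R^4$ with pole at $x_0$. Since $u$ is singular at $x_0$, the identity cannot be applied directly on ${\cal O}$, so I would first excise a small ball: pick $\er>0$ with $\overline{B_\er(x_0)} \subset {\cal O}$ and $\er<r$, and apply \eqref{gr2} on the punctured domain ${\cal O}_\er := {\cal O} \setminus \overline{B_\er(x_0)}$. Its boundary inside $B_r(x_0)$ splits into the two pieces $\dd {\cal O} \cap B_r(x_0)$ and $\dd B_\er(x_0)$, with the outward unit normal on the latter piece pointing toward $x_0$.

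On ${\cal O}_\er$ the function $u$ is smooth with $\D u \equiv 0$, so the $v\,\D u$ contribution in \eqref{gr2} drops and I obtain
\begin{equation*}
\int_{{\cal O}_\er \cap B_r(x_0)} u\,\D v\, dx
= I_{\dd {\cal O}} + I_{\dd B_\er} + I_{\dd B_r},
\end{equation*}
where $I_\Sigma := \int_\Sigma \bigl(u\,\dd v/\dd \nu - v\,\dd u/\dd \nu\bigr)\,dS$ for each boundary piece $\Sigma$. Using the formula $\dd u/\dd \nu = -1/(2\pi^2 r^3)$ on $\dd B_r(x_0)$ recalled just before the statement, $I_{\dd B_r}$ equals the two $\dd B_r(x_0)$-integrals in the target identity, and $I_{\dd {\cal O}}$ is already the $\dd {\cal O} \cap B_r(x_0)$-term. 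So it only remains to analyze $I_{\dd B_\er}$ as $\er \to 0$ and to pass to the limit on the left.

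On $\dd B_\er(x_0)$ we have $u = 1/(4\pi^2 \er^2)$ and, since the outward normal to ${\cal O}_\er$ points toward $x_0$, $\dd u/\dd \nu = +1/(2\pi^2 \er^3)$. The $u\,\dd v/\dd \nu$-contribution to $I_{\dd B_\er}$ is therefore at most $(\sup |\n v|)\cdot\mes_3(\dd B_\er(x_0))/(4\pi^2 \er^2) = O(\er)$, while the $-v\,\dd u/\dd \nu$ contribution equals $-\frac{1}{2\pi^2 \er^3}\int_{\dd B_\er(x_0)} v\,dS$, which tends to $-v(x_0)$ by continuity of $v$ and $\mes_3(\dd B_\er(x_0)) = 2\pi^2 \er^3$. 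Local integrability of $|x-x_0|^{-2}$ in $\R^4$ allows the left-hand integral to converge to $\int_{{\cal O} \cap B_r(x_0)} u\,\D v\, dx$ by dominated convergence. Moving $-v(x_0)$ to the left gives the claimed identity. The only technical point is that \eqref{gr2} holds for ${\cal O}_\er$ only for almost every $r$; but the exceptional $r$-sets for a countable sequence $\er_n \to 0$, together with the exceptional $r$-set for ${\cal O}$ itself, form a null set, so the argument goes through simultaneously along such a sequence for almost every $r$.
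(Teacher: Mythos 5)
Your proof is correct and is the standard derivation of Green's representation formula by excising a small ball around the singularity of the fundamental solution and applying the second Green identity \eqref{gr2} on the punctured domain; the paper presents Corollary~\ref{c36} without proof, implicitly relying on exactly this argument. Your handling of the almost-every-$r$ issue (taking a countable sequence $\er_n\to 0$ so that the union of exceptional null sets is still null) is the right technical care here, and all the limit computations on $\dd B_\er(x_0)$ are correct, including the orientation of the normal and the factor $\mes_3(\dd B_\er) = 2\pi^2\er^3$.
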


\section{Dirichlet problem in $\O$}
\subsection{Functional $J$}

Let $\ph \in L_2(\O)$.
We consider the functional
\begin{equation}
\label{41}
J[v] = \int_\O \left(\frac{|\n v|^2}2 - \ph \, v\right) dx, 
\qquad v \in \mathring W_2^1 (\O).
\end{equation}
By Lemma \ref{l32} we have
$$
\left|\int_\O \ph\, v\, dx\right| \le \|v\|_{L_2(\O)} \|\ph\|_{L_2(\O)}
\le \sqrt{\frac{15}7} \|\n v\|_{L_2(\O)} \|\ph\|_{L_2(\O)},
$$
and therefore, the functional $J$ is semi-bounded from below,
$$
J[v] \ge \frac14 \|\n v\|_{L_2(\O)}^2 - \frac{15}7 \|\ph\|_{L_2(\O)}^2.
$$
In a standard way we have the following

\begin{theorem}
\label{t41}
Let the functional $J$ be defined by \eqref{41} with $\ph \in L_2(\O)$.

a) Functional $J$ attains its minimum on $\mathring W_2^1 (\O)$.

b) The Dirichlet problem
\begin{equation}
\label{415}
\begin{cases}
- \D u (x) = \ph (x) \quad \text{in} \ \O, \\
\left.u\right|_{\dd\O} = 0,
\end{cases}
\end{equation}
has a unique solution $u \in \mathring W_2^1 (\O)$.

c) This function $u$ is a minimizer of the functional $J$.
The minimizer is unique.
\end{theorem}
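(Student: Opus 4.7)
The plan is to exploit the Hilbert space structure provided by the Friedrichs inequality (Lemma \ref{l32}), which allows us to treat $\mathring W_2^1(\O)$ as a Hilbert space with inner product $\<u,v\> := \int_\O \n u \cdot \n v\, dx$, whose induced norm is equivalent to the full $W_2^1$-norm restricted to $\mathring W_2^1(\O)$. Under this identification, $v \mapsto \int_\O \ph\, v\, dx$ is a bounded linear functional on $\mathring W_2^1(\O)$, since by Lemma \ref{l32} and Cauchy--Schwarz
$$
\Big|\int_\O \ph\, v\, dx\Big| \le \|\ph\|_{L_2(\O)}\|v\|_{L_2(\O)} \le C_2 \|\ph\|_{L_2(\O)}\, \|\n v\|_{L_2(\O)}.
$$

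For part (b), I would invoke the Riesz representation theorem to produce the unique $u \in \mathring W_2^1(\O)$ with
$$
\int_\O \n u \cdot \n v\, dx = \int_\O \ph\, v\, dx \qquad \forall\, v \in \mathring W_2^1 (\O).
$$
Testing against $v \in C_0^\infty(\O)$ yields $-\D u = \ph$ in $\O$ in the distributional sense, while the membership $u \in \mathring W_2^1 (\O)$ encodes the homogeneous boundary condition. Uniqueness is immediate: if $\tilde u$ were another solution then $w = u - \tilde u \in \mathring W_2^1(\O)$ satisfies $\int_\O |\n w|^2\, dx = 0$, and Lemma \ref{l32} forces $w \equiv 0$.

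For parts (a) and (c), rather than running a direct-method compactness argument (which is slightly awkward since $\O$ is unbounded), I would complete the square. For arbitrary $v \in \mathring W_2^1(\O)$, write $v = u + w$ with $w \in \mathring W_2^1(\O)$; using the identity $\int_\O \n u \cdot \n w\, dx = \int_\O \ph\, w\, dx$ from part (b), a direct computation gives
$$
J[v] - J[u] = \tfrac12 \int_\O |\n w|^2\, dx \ge 0,
$$
with equality iff $\n w \equiv 0$, which by Lemma \ref{l32} forces $w \equiv 0$. Hence $u$ is the (unique) minimizer of $J$ on $\mathring W_2^1(\O)$, proving (a) and (c) simultaneously.

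There is essentially no obstacle here: everything reduces to a one-line Riesz representation plus completion of the square, once the Friedrichs inequality of Lemma \ref{l32} is in hand. The only delicate point specific to the present setting is that $\O$ is unbounded (and rather irregular), so the classical bounded-domain Poincar\'e argument is unavailable; this is precisely what Lemma \ref{l32} circumvents by leveraging the sparsity of $\O$ in each cross-section together with Lemma \ref{l31}.
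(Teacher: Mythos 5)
Your proof is correct. The paper itself offers no explicit argument for Theorem \ref{t41} --- it simply remarks that it holds ``in a standard way'' after establishing (i) the boundedness of the linear form $v \mapsto \int_\O \ph v\,dx$ on $\mathring W_2^1(\O)$ and (ii) the lower bound $J[v] \ge \frac14\|\n v\|_{L_2(\O)}^2 - C_2^2\|\ph\|_{L_2(\O)}^2$. That preparatory material points toward the direct method of the calculus of variations: take a minimizing sequence, use coercivity to extract a weakly convergent subsequence, and pass to the limit via weak lower semicontinuity of the Dirichlet energy, then derive the Euler--Lagrange equation afterward. You instead proceed in the opposite order: first solve the weak Dirichlet problem (b) via Riesz representation, using the Friedrichs inequality of Lemma \ref{l32} to turn the Dirichlet bilinear form into an equivalent inner product on $\mathring W_2^1(\O)$, and then obtain (a) and (c) by the purely algebraic ``complete the square'' identity $J[u+w] - J[u] = \frac12\int_\O|\n w|^2\,dx$. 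Both routes are standard; yours is arguably cleaner here because it sidesteps all compactness and weak-convergence considerations, and it makes explicit that the only non-elementary input specific to this irregular unbounded $\O$ is exactly Lemma \ref{l32}. One small remark: Lemma \ref{l32} as stated bounds $\|u\|_{L_2(\O)}$ by the tangential gradient $\|\n_y u\|_{L_2(\O)}$ only, but since $|\n_y u| \le |\n u|$ this immediately yields the full Friedrichs/Poincar\'e inequality you need, so the norm equivalence you invoke is indeed available.
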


\begin{rem} 
Under the conditions of Theorem \ref{t41} 
\begin{equation}
\label{42}
\|u\|_{W_2^1(\O)} \le 3 \|\ph\|_{L_2(\O)}.
\end{equation}
Indeed, 
$$
\|\n u\|_{L_2(\O)}^2 = \int_\O u \, \ph\, dx 
\le \|u\|_{L_2(\O)} \|\ph\|_{L_2(\O)}.
$$
By virtue of Lemma \ref{l32} 
$\|u\|_{L_2(\O)} \le \sqrt{\frac{15}7} \|\n u\|_{L_2(\O)}$,
therefore,
$$
\|\n u\|_{L_2(\O)} \le \sqrt{\frac{15}7} \|\ph\|_{L_2(\O)}, 
$$
\begin{equation}
\label{435}
\|u\|_{L_2(\O)} \le \frac{15}7 \|\ph\|_{L_2(\O)},
\end{equation}
and so
$$
\|\n u\|_{L_2(\O)}^2 + \|u\|_{L_2(\O)}^2 \le 
\left(\frac{15}7 + \frac{225}{49}\right) \|\ph\|_{L_2(\O)}^2 \le 9 \|\ph\|_{L_2(\O)}^2.
$$
\end{rem}

\begin{lemma}
\label{l43}
If under the assumptions of Theorem \ref{t41} 
$\ph(x) \ge 0$ a.e. in $\O$, then $u(x) \ge 0$ a.e. in $\O$.
\end{lemma}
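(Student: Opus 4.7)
The plan is to use the variational characterization of $u$ from Theorem \ref{t41}~c). Since $u$ is the unique minimizer of the functional $J$ on $\mathring W_2^1(\O)$, it suffices to produce a competitor $\tilde u \in \mathring W_2^1(\O)$ satisfying $\tilde u \ge 0$ a.e.\ and $J[\tilde u] \le J[u]$; then by uniqueness $u = \tilde u \ge 0$. The natural choice is $\tilde u = |u|$.

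First I would recall the standard fact that $u \in W_2^1(\O) \Rightarrow |u| \in W_2^1(\O)$, with
$$
\n |u|(x) = \operatorname{sgn}(u(x)) \, \n u(x) \quad \text{a.e.,}
$$
so in particular $|\n |u|(x)| = |\n u(x)|$ a.e.\ in $\O$. Moreover, since the map $t \mapsto |t|$ is Lipschitz with $|t|=0$ at $t=0$, it follows that if $u \in \mathring W_2^1(\O)$ then $|u| \in \mathring W_2^1(\O)$ as well (one can see this by approximating $u$ by $C_0^\infty(\O)$ functions and using continuity of the map $u \mapsto |u|$ in $W_2^1$).

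Next I would compute
$$
J[|u|] - J[u] = \int_\O \left(\frac{|\n |u||^2}{2} - \frac{|\n u|^2}{2}\right) dx
- \int_\O \ph \,(|u| - u)\, dx = - \int_\O \ph \,(|u| - u)\, dx.
$$
Since $\ph \ge 0$ and $|u| - u \ge 0$ a.e., the integrand is non-negative, hence $J[|u|] \le J[u]$.

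Finally, because the minimizer is unique (Theorem \ref{t41}~c)), we must have $|u| = u$ a.e.\ in $\O$, which is precisely the statement $u \ge 0$ a.e. No real obstacle arises; the only subtle point is confirming the chain rule for $|u|$ in $\mathring W_2^1(\O)$, which is a standard result (e.g., Stampacchia's theorem) so I would simply cite it rather than reprove it.
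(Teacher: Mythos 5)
Your proof is correct and takes essentially the same approach as the paper: exploit the variational characterization and uniqueness of the minimizer by producing a non-negative competitor with $J$-value not exceeding $J[u]$. The paper uses $u_+ = \max(u,0)$ as the competitor rather than $|u|$, but the mechanism (gradient term does not increase, the $\ph$-term does not increase since $\ph \ge 0$, hence equality and $u$ coincides with the competitor) is the same.
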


\begin{proof}
The function $u$ is the minimizer of $J$.
Consider the function
$$
u_+ (x) = \max (u(x), 0).
$$
It is well known that $u_+ \in \mathring W_2^1 (\O)$ and
$$
\n u_+ (x) = \begin{cases}
\n u(x), \quad &\text{if} \ u(x) > 0, \\
0, \quad &\text{if} \ u(x) \le 0.
\end{cases}
$$
Therefore,
$$
J[u_+] = \int_\O \left(\frac{|\n u_+|^2}2 - \ph \, u_+\right) dx \le J[u]
$$
due to the assumption $\ph \ge 0$.
Thus, the function $u_+$ is also a minimizer for the functional $J$.
Therefore, $u_+ = u$.
\end{proof}

\subsection{Dirichlet problem in $\O$}

\begin{lemma}
\label{l44}
Let $u$ be the solution to the problem \eqref{415} with
$\ph \in L_2(\O) \cap C^\ga (\overline\O)$, $\ga > 0$.
Then $u \in C^{2+\ga} (\overline\O)$.
\end{lemma}


\begin{lemma}
\label{l45}
Let $\ph \in L_2(\O) \cap C^\ga (\overline\O)$, $\ga>0$, and $\ph(x) \ge 0$ for all $x \in \O$.
Let $u \in \mathring W_2^1 (\O)$ be the solution to the Dirichlet problem \eqref{415}.
Let $x_0 = (w_0, y_0)$.
Then for almost all $r\in (0,1]$ we have
\begin{equation}
\label{43}
u (x_0) \le \int_{\O \cap B_r(x_0)} \frac{\ph (x) \, dx}{4 \pi^2 |x-x_0|^2} 
+ \frac1{2 \pi^2 r^3} \int_{\O \cap \dd B_r(x_0)} u(x) \, dS(x).
\end{equation}
\end{lemma}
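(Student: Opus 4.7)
I would derive the inequality from the Green representation formula of Corollary \ref{c36}, applied to $v=u$ with the $\R^4$ fundamental solution $G(x)=\frac1{4\pi^2|x-x_0|^2}$, and then discard nonpositive contributions using the three sign constraints $u\ge 0$, $\ph\ge 0$, and $\dd_\nu u\le 0$ on $\dd\O$.

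The case $x_0\notin\O$ is trivial because $u(x_0)=0$ and the right-hand side of \eqref{43} is nonnegative, so from now on I assume $x_0\in\O$. The sign constraints are obtained as follows: Lemma \ref{l43} gives $u\ge 0$ a.e.\ in $\O$; Lemma \ref{l44} upgrades $u$ to a function in $C^{2+\al}\bigl(\overline{\O\cap B_1(x_0)}\bigr)$, so the inequality is pointwise and $u\equiv 0$ on $\dd\O\cap B_1(x_0)$. Since $u$ attains its boundary value $0$ from a nonnegative interior, the outward normal derivative (with $\nu$ pointing out of $\O$) satisfies $\dd_\nu u\le 0$ at every point of $\dd\O\cap B_1(x_0)$.

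To apply Corollary \ref{c36}, which is stated for a bounded smooth domain ${\cal O}$, I would first introduce a bounded domain ${\cal O}$ with $C^\infty$-boundary such that ${\cal O}\cap B_1(x_0)=\O\cap B_1(x_0)$ and extend $u$ to a $C^2(\overline{\cal O})$ function; this is possible by Corollary \ref{c22} (which guarantees smoothness of $\dd\O$ for $w>w_2$, and $w_0\ge w_3>w_2$). For $R<1$ the choice of ${\cal O}$ outside $B_1(x_0)$ does not enter the formula. Using $u|_{\dd\O}=0$ and $-\D u=\ph$, Corollary \ref{c36} gives, for a.e.\ $R\in(0,1)$,
\begin{equation*}
u(x_0)=\int_{\O\cap B_R(x_0)}G\,\ph\,dx+\int_{\dd\O\cap B_R(x_0)}G\,\dd_\nu u\,dS+\frac1{4\pi^2 R^2}\int_{\O\cap\dd B_R(x_0)}\dd_\nu u\,dS+\frac1{2\pi^2 R^3}\int_{\O\cap\dd B_R(x_0)}u\,dS.
\end{equation*}
Next I would apply \eqref{gr1} to $v=u$ in the same domain and for the same $R$, obtaining $-\int_{\O\cap B_R}\ph\,dx=\int_{\dd\O\cap B_R}\dd_\nu u\,dS+\int_{\O\cap\dd B_R}\dd_\nu u\,dS$, and substitute the resulting expression for $\int_{\O\cap\dd B_R}\dd_\nu u\,dS$ back into the representation formula. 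Collecting terms produces
\begin{equation*}
u(x_0)=\int_{\O\cap B_R}\l(G-\tfrac1{4\pi^2 R^2}\r)\ph\,dx+\int_{\dd\O\cap B_R}\l(G-\tfrac1{4\pi^2 R^2}\r)\dd_\nu u\,dS+\frac1{2\pi^2 R^3}\int_{\O\cap\dd B_R}u\,dS.
\end{equation*}

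On $B_R(x_0)$ the factor $\bigl(G-\frac1{4\pi^2 R^2}\bigr)$ is nonnegative; together with $\dd_\nu u\le 0$ on $\dd\O$ this makes the boundary integral over $\dd\O\cap B_R$ nonpositive, so it can be dropped. Likewise $\int_{\O\cap B_R}\l(G-\frac1{4\pi^2 R^2}\r)\ph\,dx\le\int_{\O\cap B_R}G\,\ph\,dx$ because the discarded quantity $\frac1{4\pi^2 R^2}\int_{\O\cap B_R}\ph\,dx$ is $\ge 0$. These two reductions yield the desired inequality \eqref{43}. The main obstacle is not the bookkeeping but the rigorous application of Corollary \ref{c36}: one must justify either the construction of the auxiliary bounded smooth ${\cal O}$ mentioned above, or redo the Green's identity argument directly in $\O\cap B_R(x_0)\setminus B_\er(x_0)$ and pass to the limit $\er\to 0$, checking integrability of $G\,\ph$ and the vanishing of the boundary terms on $\dd B_\er(x_0)$ except for the contribution that recovers $u(x_0)$.
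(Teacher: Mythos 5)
Your proposal is correct and follows essentially the same route as the paper: it combines Corollary \ref{c36} with the integrated form of \eqref{gr1}, subtracts $\tfrac1{4\pi^2R^2}$ times the latter from the former to produce the weight $\frac1{4\pi^2}\bigl(|x-x_0|^{-2}-R^{-2}\bigr)$, and then discards the $\dd\O$ boundary term (using $u\ge0$, hence $\dd_\nu u\le0$) and the negative part of the bulk term (using $\ph\ge0$). Your remarks on truncating $\O$ to a bounded smooth domain and on the trivial case $x_0\notin\O$ are sound housekeeping that the paper leaves implicit, not a different method.
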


\begin{proof}
By Lemma \ref{l44} $u \in C^2 (\overline\O)$.
Now we use the identity \eqref{gr1} and Corollary \ref{c36}.
For almost all $r>0$ 
\begin{equation}
\label{44}
0 = \int_{\O \cap B_r(x_0)} \ph(x) \, dx 
+ \int_{\dd\O \cap B_r(x_0)} \frac{\dd u}{\dd\nu} \, dS(x) 
+ \int_{\O \cap \dd B_r(x_0)} \frac{\dd u}{\dd\nu} \, dS(x) ,
\end{equation}
and
\begin{eqnarray}
\nonumber
u(x_0) = \int_{\O \cap B_r (x_0)} \frac{\ph (x) \, dx}{4 \pi^2 |x-x_0|^2} 
+ \int_{\dd \O \cap B_r (x_0)} \frac{\dd u(x)}{\dd\nu} \, \frac1{4 \pi^2 |x-x_0|^2} dS (x) \\
+ \frac1{4\pi^2 r^2} \int_{\O \cap \dd B_r (x_0)} \frac{\dd u(x)}{\dd\nu} \, dS(x)
+ \frac1{2\pi^2 r^3} \int_{\O \cap \dd B_r (x_0)} u(x) \, dS(x) ,
\label{45}
\end{eqnarray}
where we used the boundary condition 
$\left.u\right|_{\dd\O} = 0$.
Multiplying \eqref{44} by $\frac1{4\pi^2r^2}$ and substracting from \eqref{45} we obtain
\begin{eqnarray}
\label{46}
u(x_0) = \int_{\O \cap B_r (x_0)} \frac{\ph(x)}{4\pi^2}
\left(\frac1{|x-x_0|^2} - \frac1{r^2}\right) dx \\
+ \int_{\dd \O \cap B_r (x_0)} \frac{\dd u(x)}{\dd\nu} \frac1{4\pi^2}
\left(\frac1{|x-x_0|^2} - \frac1{r^2}\right) dS(x) 
+ \frac1{2\pi^2 r^3} \int_{\O \cap \dd B_r (x_0)} u(x) \, dS(x) .
\nonumber
\end{eqnarray}
The first term in the right hand side is not greater than 
$\int_{\O \cap B_r(x_0)} \frac{\ph (x) \, dx}{4 \pi^2 |x-x_0|^2}$.
By virtue of Lemma \ref{l43} $u(x) \ge 0$ in $\O$.
Therefore, $\left.\frac{\dd u}{\dd\nu}\right|_{\dd\O} \le 0$, 
and the second term in the right hand side is non-positive,
as $|x-x_0|^2 \le r^2$ for $x \in B_r(x_0)$.
Now, \eqref{43} follows from \eqref{46}.
\end{proof}

\begin{cor}
\label{c455}
Let $\ph \in L_2(\O) \cap C^\ga (\overline\O)$, $\ga>0$, and $\ph(x) \ge 0$ for all $x \in \O$.
Let $u \in \mathring W_2^1 (\O)$ be the solution to the Dirichlet problem \eqref{415}.
Let $x_0 = (w_0, y_0)$, $0<R\le 1$.
Then 
\begin{equation*}
u (x_0) \le \int_{\O \cap B_R(x_0)} \frac{\ph (x) \, dx}{4 \pi^2 |x-x_0|^2} 
+ \frac2{\pi^2 R^4} \int_{\O \cap B_R(x_0)} u(x) \, dx.
\end{equation*}
\end{cor}

\begin{proof}
For almost all $r \in (0,R)$ we have
$$
2 \pi^2 r^3 u (x_0) \le 2 \pi^2 r^3 \int_{\O \cap B_R(x_0)} \frac{\ph (x) \, dx}{4 \pi^2 |x-x_0|^2} 
+ \int_{\O \cap \dd B_r(x_0)} u(x) \, dS(x).
$$
Integrating this inequality with respect to $r$ from $0$ to $R$ we get
$$
\frac{\pi^2R^4}2 u(x_0) 
\le \frac{\pi^2R^4}2 \int_{\O \cap B_R(x_0)} \frac{\ph (x) \, dx}{4 \pi^2 |x-x_0|^2} 
+ \int_{\O \cap B_R(x_0)} u(x) \, dx.
\qquad \qedhere
$$
\end{proof}

\begin{lemma}
\label{l46}
Let 
$$
\ph \in L_2(\O) \cap L_\infty(\O) \cap C^\ga (\overline\O), \quad \ga>0, 
\qquad \text{and} \quad \ph(x) \ge 0 \quad \forall\ x \in \O.
$$
Let $u \in \mathring W_2^1 (\O)$ be the solution to the Dirichlet problem \eqref{415}.
Then $u \in L_\infty (\O)$ and
$$ 
\|u\|_{L_\infty (\O)} 
\le \frac14 \|\ph\|_{L_\infty(\O)} + \|\ph\|_{L_2(\O)}.
$$
\end{lemma}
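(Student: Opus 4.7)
The plan is to apply the pointwise representation \eqref{43} of Lemma \ref{l45} at a cleverly chosen radius $R \in (1/2, 1)$. Fix any $x_0 = (w_0, y_0)$ with $w_0 \ge w_3$. By Lemma \ref{l44}, $u$ has a $C^2$ representative on $\overline{\O \cap B_1(x_0)}$, so $u(x_0)$ is well-defined; by Lemma \ref{l43}, $u \ge 0$ in $\O$, so it suffices to bound $u(x_0)$ from above.

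To select $R$, I extend $u$ by zero outside $\O$; then $\|u\|_{L_2(\R^4)} = \|u\|_{L_2(\O)} \le C_3 \|\ph\|_{L_2(\O)}$ by \eqref{42}, and the coarea formula yields
$$
\int_0^1 \|u\|_{L_2(\dd B_R(x_0))}^2 \, dR = \|u\|_{L_2(B_1(x_0))}^2 \le C_3^2 \|\ph\|_{L_2(\O)}^2.
$$
Chebyshev's inequality then gives $\mes_1\{R \in (0,1) : \|u\|_{L_2(\dd B_R(x_0))}^2 > 4 C_3^2 \|\ph\|_{L_2(\O)}^2\} \le 1/4$. Combined with the a.e.\ validity of \eqref{43}, a positive-measure subset of $R \in (1/2, 1)$ satisfies both \eqref{43} and $\|u\|_{L_2(\dd B_R(x_0))} \le 2 C_3 \|\ph\|_{L_2(\O)}$.

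Pick any such $R$. The first term of \eqref{43} is controlled via the four-dimensional polar-coordinate identity $\int_{B_R} dx/(4\pi^2 |x|^2) = R^2/4$, yielding the bound $\|\ph\|_{L_\infty(\O)}\, R^2/4 \le \frac14 \|\ph\|_{L_\infty(\O)}$ since $R \le 1$. The second term is estimated by Cauchy--Schwarz (using $\mes_3(\dd B_R) = 2\pi^2 R^3$ and $R > 1/2$):
$$
\frac1{2\pi^2 R^3} \int_{\O \cap \dd B_R(x_0)} u \, dS
\le \frac{\|u\|_{L_2(\dd B_R(x_0))}}{\sqrt{2\pi^2 R^3}}
\le \frac{2 C_3 \|\ph\|_{L_2(\O)}}{\sqrt{\pi^2/4}}
= \frac{4 C_3}{\pi}\, \|\ph\|_{L_2(\O)}.
$$
Summing the two estimates and taking the supremum over $x_0$ yields the claim with $C_4 = 4 C_3/\pi$.

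The only non-routine step is producing a single admissible $R \in (1/2, 1)$ at which both \eqref{43} holds and $\|u\|_{L_2(\dd B_R(x_0))}$ is controlled by $\|\ph\|_{L_2(\O)}$; this is handled by the Chebyshev--coarea argument above. Notably, the sphere-sparsity provided by Corollary \ref{c27} is not needed here: the trivial estimate $\mes_3(\O \cap \dd B_R) \le \mes_3(\dd B_R)$ suffices, since $R$ is kept bounded away from $0$.
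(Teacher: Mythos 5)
Your proof is correct, and it takes a genuinely different route from the paper's for bounding the boundary-integral term in \eqref{43}. The paper extends $u$ by zero to $\tilde u \in W_2^1(B_1(x_0))$ and invokes a trace theorem with a constant uniform over $r \in [\tfrac12,1]$, giving $\|\tilde u\|_{L_1(\dd B_r(x_0))} \le C_5 \|\tilde u\|_{W_2^1(B_r(x_0))}$; combined with \eqref{42} this controls the second term for \emph{any} admissible $R \in (\tfrac12,1)$, so no further selection of $R$ is needed beyond the a.e.\ validity of \eqref{43}. You instead avoid the trace theorem entirely: by Fubini in polar coordinates and Markov's inequality you carve out a positive-measure set of radii $R \in (\tfrac12,1)$ on which the restriction of $u$ to $\dd B_R(x_0)$ has $L_2$ norm controlled by $\|\ph\|_{L_2(\O)}$, and then apply Cauchy--Schwarz on the sphere. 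Your approach is more elementary in that it uses only Fubini and Chebyshev in place of a Sobolev trace inequality, at the cost of an extra "good radius'' selection step. Both yield the stated bound with an absolute constant; the value of $C_4$ differs (you get $4C_3/\pi$, the paper $4C_3C_5/\pi^2$), but neither is optimized. The estimate of the first term (the observation that $\int_{B_R} |x-x_0|^{-2}\,dx/(4\pi^2) = R^2/4 \le 1/4$) is the same in both arguments.
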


\begin{proof}
Let $x_0 = (w_0, y_0) \in \O$.
By Corollary \ref{c455} with $R=1$ we have
\begin{equation}
\label{439}
u (x_0) \le \int_{\O \cap B_1(x_0)} \frac{\ph (x) \, dx}{4 \pi^2 |x-x_0|^2} 
+ \frac2{\pi^2} \int_{\O \cap B_1(x_0)} u(x) \, dx.
\end{equation}
The first term in the right hand side of \eqref{439} is not greater than
$$
\int_{\O \cap B_1(x_0)} \frac{\ph (x) \, dx}{4 \pi^2 |x-x_0|^2} 
\le \frac{\|\ph\|_{L_\infty(\O)}}{4 \pi^2} \int_{B_1(x_0)} \frac{dx}{|x-x_0|^2} 
= \frac{\|\ph\|_{L_\infty(\O)}}4 .
$$
For the second term in the right hand side of \eqref{439} we have due to \eqref{435}
$$
\frac2{\pi^2} \int_{\O \cap B_1(x_0)} u(x) \, dx
\le \frac2{\pi^2} \left(\frac{\pi^2}2\right)^{1/2} \|u\|_{L_2(\O\cap B_1(x_0))} 
\le \frac{\sqrt 2}\pi \cdot \frac{15}7 \|\ph\|_{L_2(\O)} \le \|\ph\|_{L_2(\O)}.
$$
Thus,
$$
0 \le u(x_0) \le \frac14 \|\ph\|_{L_\infty(\O)} + \|\ph\|_{L_2(\O)}.
\quad \qedhere
$$
\end{proof}

\section{Dirichlet problem in $\O$ with exponentially decaying RHS}
Introduce the following notations
$$
R(w) = \begin{cases}
50 \sqrt h \,w_m^{-1/3}, & \quad \text{if} \ |w| < w_m, \\
50 \sqrt h \,|w|^{-1/3}, & \quad \text{if} \ |w| \ge w_m, \end{cases}
\qquad B(x) : = B_{R(w)} (x), \quad x = (w,y).
$$

\begin{lemma}
\label{l511}
For all $w \ge 0$ we have
$$
R\left(w-R(w)\right) \le \frac{10}9 \,R(w) 
$$
and
\begin{equation}
\label{48}
\al(w)^{4/3} - \al\left(w-R(w)\right)^{4/3} \le 200 \sqrt h .
\end{equation}
\end{lemma}

\begin{proof}
If $0 \le w \le w_m$ then $R(w-R(w)) = R(w)$.

If $w \ge w_m$ then 
$w - R(w) \ge \left(\frac9{10}\right)^3 w$, and therefore,
$$
R\left(w-R(w)\right) \le \frac{50 \sqrt h}{(w-R(w))^{1/3}} 
\le \frac{500 \sqrt h}{9 \,w^{1/3}} = \frac{10}9 \,R(w) .
$$

For the second inequality we have
due to Corollary \ref{c145}
$$
\al(w)^{4/3} - \al\left(w-R(w)\right)^{4/3} 
\le 4 \,\al(w)^{1/3} R(w) \le 200 \sqrt h . \quad \qedhere
$$
\end{proof}

Assume that a given smooth function $\ph$ in $\O$ satisfies the inequality
\begin{equation}
\label{ph}
0 \le \ph(x) \le 10 \,e^{-\al(w)^{4/3}} .
\end{equation}

\begin{lemma}
\label{lPh}
If a function $\ph$ satisfies \eqref{ph} then the function
\begin{equation}
\label{Ph}
\Phi (x) = \int_{\O\cap B(x)} \frac{\ph(\tilde x)\, d\tilde x}{4 \pi^2 |x-\tilde x|^2}
\end{equation}
satisfies the estimate 
$$
\Phi(x) \le 3 R(w)^2 e^{-\al(w)^{4/3}} .
$$
\end{lemma}

\begin{proof}
It is enough to prove the estimate for $w \ge 0$.
By assumption
$$
\Phi(x) \le 
10\, e^{-\al(w-R(w))^{4/3}} \int_{B(x)} \frac{d\tilde x}{4 \pi^2 |x-\tilde x|^2}.
$$
We have also
$$
\int_{B(x)} \frac{d\tilde x}{4 \pi^2 |x-\tilde x|^2} = \frac{R(w)^2}4.
$$
Finally, by virtue of the preceding Lemma
$$
e^{-\al(w-R(w))^{4/3}} \le e^{200 \sqrt h} e^{-\al(w)^{4/3}} .
$$
The claim follows.
\end{proof}

Let the function $\Phi$ be defined by \eqref{Ph}
with $\ph$ satisfying \eqref{ph}.
In the space $L_\infty (\O)$ we consider the map $T$ defined by the formula
\begin{equation}
\label{T}
(T\rho) (x) = \Phi (x) + \frac2{\pi^2 R(w)^4} \int_{\O \cap B(x)} \rho(\tilde x) \, d \tilde x.
\end{equation}
Clearly,
\begin{eqnarray*}
\left|(T\rho_1)(x) - (T\rho_2)(x)\right| 
= \left|\frac2{\pi^2 R(w)^4} 
\int_{\O \cap B(x)} \left(\rho_1(\tilde x) - \rho_2(\tilde x)\right) d\tilde x\right|\\
\le \frac{\mes_4 \left(\O \cap B(x)\right)}{\mes_4 B(x)} \,\|\rho_1-\rho_2\|_{L_\infty(\O)}
\le \frac12 \,\|\rho_1-\rho_2\|_{L_\infty(\O)}
\end{eqnarray*}
due to Corollary \ref{c261}.
Thus, the map $T$ is a contraction, and hence, there exists a unique function
$\rho_* \in L_\infty (\O)$ such that $T \rho_* = \rho_*$.

\begin{lemma}
\label{lrho}
a) If $u \in L_\infty (\O)$ satisfies the inequality
$$
u(x) \le (Tu) (x) \qquad \text{a.e. in } \O,
$$
then $u(x) \le \rho_* (x)$ a.e. in $\O$.

b)  If $u \in L_\infty (\O)$ satisfies the inequality
$$
u(x) \ge (Tu) (x) \qquad \text{a.e. in } \O,
$$
then $u(x) \ge \rho_* (x)$ a.e. in $\O$.
\end{lemma}

\begin{proof}
a) Let us consider a function 
$$
\rho_0(x) = \const > \|u\|_{L_\infty(\O)}.
$$
Put $\rho_k = T^k \rho_0$.
Clearly, 
$$
\rho_k \mathop{\longrightarrow}\limits_{k \to \infty} \rho_* \quad \text{in} \quad L_\infty (\O).
$$
On the other hand, we have
$$
u(x) \le \rho_k (x) \qquad \text{a.e. in } \O
$$
by induction in $k$.
Taking a limit $k \to \infty$ we get $u(x) \le \rho_* (x)$.

b) Now, take a function
$$
\rho_0(x) = \const < - \|u\|_{L_\infty(\O)},
$$
and $\rho_k = T^k \rho_0$.
In the same way,
$$
u(x) \ge \rho_k (x) \qquad \text{a.e. in } \O,
$$
and therefore, $u(x) \ge \rho_* (x)$ a. e. in $\O$.
\end{proof}

\begin{lemma}
\label{lPsi}
Put
$$
\Psi (x) = 10 R(w)^2 e^{-\al(w)^{4/3}}.
$$
Then
$$
\frac2{\pi^2 R(w)^4} \int_{\O \cap B(x)} \Psi (\tilde x) \, d \tilde x
\le \frac7{10} \, \Psi (x).
$$
\end{lemma}

\begin{proof}
It is enough to consdier the case $w \ge 0$.
We have
\begin{eqnarray*}
\frac2{\pi^2 R(w)^4} \int_{\O \cap B(x)} \Psi (\tilde x) \, d \tilde x\\
\le \frac2{\pi^2 R(w)^4} \cdot 10 \, R(w-R(w))^2 e^{-\al(w-R(w))^{4/3}} 
\cdot \mes_4 \left(\O \cap B(x)\right) \\
\le \frac12 \cdot 10 \cdot \frac{100}{81} R(w)^2 e^{200\sqrt h} e^{-\al(w)^{4/3}}
\end{eqnarray*}
due to Corollary \ref{c261} and Lemma \ref{l511}.
\end{proof}

\begin{theorem}
\label{t48}
Let 
$$
\ph \in C^\ga (\overline\O), \quad \ga > 0, \qquad
0 \le \ph(x) \le 10 \, e^{-\al(w)^{4/3}} \quad \forall \ x \in \O.
$$
Let $u$ be the solution to the Dirichlet problem
$$
\begin{cases}
- \D u (x) = \ph (x) \quad \text{in} \ \O, \\
\left.u\right|_{\dd\O} = 0.
\end{cases}
$$
Then 
$$
0 \le u(x) \le 10 \,R(w)^2 e^{-\al(w)^{4/3}} \quad \forall \ x \in \O.  
$$
\end{theorem}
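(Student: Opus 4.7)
The plan is to control the non-increasing majorant $H(w) := \sup\{u(\tilde x) : \tilde x \in \O, \, \tilde w \ge w\}$ via an iterative one-scale estimate. Lemma \ref{l43} gives $u \ge 0$ since $\ph \ge 0$, and Lemma \ref{l46} furnishes the crude uniform bound $H(w) \le M_1$ on $[w_3, \infty)$, so $H$ is well-defined and finite.

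At a point $x_* = (w_*, y_*)$ with $w_* \ge w_4$, I would combine Corollary \ref{c27} (sparsity of $\O$ on small spheres) with Lemma \ref{l45} to select a radius $r_* < 3/(4 w_*^{1/3})$ for which both the representation \eqref{43} and the sparsity bound $\mes_3(\dd B_{r_*}(x_*) \cap \O) \le 32 \sqrt h \, \mes_3(\dd B_{r_*})$ hold. The volume integral in \eqref{43} is controlled by plugging in $\ph \le \tilde C e^{-w^{4/3}}$ and using \eqref{48}, giving a term of size $\hat C w_*^{-2/3} e^{-w_*^{4/3}}$; the surface integral is dominated by $32 \sqrt h \cdot H(w_* - r_*)$ thanks to the sparsity estimate. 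Together with \eqref{47} this produces the recursion
$$H(w) \le \hat C \, w^{-2/3} e^{-w^{4/3}} + 32 \sqrt h \, H\bigl((w^{4/3} - 1)^{3/4}\bigr).$$

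Iterating $N = \lfloor w^{4/3}/2 \rfloor$ times is where the choice \eqref{h} of $h$ pays off: since $32 \sqrt h = e^{-5}$, the factor $32 \sqrt h \cdot e = e^{-4}$ is strictly less than $1$, so the resulting geometric series converges and yields a main-term bound of order $w^{-2/3} e^{-w^{4/3}}$. The residual $(32\sqrt h)^N M_1 \le e^4 M_1 e^{-2 w^{4/3}}$ is doubly exponentially small and, for $w \ge w_6$ taken sufficiently large, is absorbed into the main term, giving $u(x) \le H(w) \le C_6 \tilde C w^{-2/3} e^{-w^{4/3}}$. The only genuine obstacle is bookkeeping: one must ensure that the $N \asymp w^{4/3}$ iterations stay within the range where $H$ is defined (arguments above $w_4$) and that the factors $(w^{4/3} - j)^{-1/2}$ appearing at the $j$-th step remain comparable to $w^{-2/3}$; both are arranged by stopping at $N = \lfloor w^{4/3}/2 \rfloor$, so that $w^{4/3} - j \ge w^{4/3}/2$ throughout the iteration.
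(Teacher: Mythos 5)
Your proposal reproduces the paper's proof essentially step for step: the same majorant $H(w)$, the same use of Lemma \ref{l43} and Lemma \ref{l46} for nonnegativity and the crude bound $M_1$, the same selection of $r_*$ via Corollary \ref{c27} and Lemma \ref{l45}, the same splitting of \eqref{43} into a volume term of size $\hat C w^{-2/3}e^{-w^{4/3}}$ and a surface term $\le 32\sqrt h\, H(w_*-r_*)$, the same recursion via \eqref{47}, the same choice $N=\lfloor w^{4/3}/2\rfloor$, and the same geometric-series summation using $32\sqrt h\,e=e^{-4}<1$ with absorption of the tail $(32\sqrt h)^N M_1 = O(e^{-2w^{4/3}})$ into the main term. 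The only cosmetic difference is your phrase ``for $w\ge w_6$ taken sufficiently large'': $w_6$ is a fixed constant chosen so that $w_6^{4/3}\ge 2w_4^{4/3}$, which makes the iteration legal on the whole range $w\ge w_6$ at once, and the tail is absorbed via the elementary bound $w^{2/3}e^{-w^{4/3}}\le (2e)^{-1/2}$ rather than by further enlarging $w$.
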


\begin{proof}
Let $\Phi$ be the function defined by \eqref{Ph} with the given function $\ph$,
and $T$ be the map defined by \eqref{T}.
By Corollary \ref{c455} $u(x) \le (Tu)(x)$ for all $x \in \O$.
On the other hand, for the function $\Psi$ defined in the preceding Lemma 
we have due to Lemma \ref{lPh} and Lemma \ref{lPsi}
$$
(T\Psi) (x) = \Phi (x) + \frac2{\pi^2 R(w)^4} \int_{\O \cap B(x)} \Psi (\tilde x) \, d \tilde x
\le 3\,R(w)^2 e^{-\al(w)^{4/3}} + \frac7{10} \, \Psi (x) = \Psi (x). 
$$
Now, Lemma \ref{lrho} implies
$$
u(x) \le \rho_* (x) \le \Psi (x). \qquad \qedhere
$$
\end{proof}

\begin{cor}
\label{c49}
Let $u \in \mathring W_2^1 (\O)$ be the solution to the Dirichlet problem
$$
\begin{cases}
- \D u (x) = \D f (x) \quad \text{in} \ \O, \\
\left.u\right|_{\dd\O} = 0.
\end{cases}
$$
with the function $f$ defined in \S \ref{s1}.
Then
$$
|u(x)| \le 10 \,R(w)^2 e^{-\al(w)^{4/3}} \quad \forall \ x \in \O.  
$$
\end{cor}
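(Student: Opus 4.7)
The plan is to reduce the statement to Theorem \ref{t48} by splitting the right-hand side $\Delta f$ into its positive and negative parts and applying the theorem to each piece separately.

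First I would set $\varphi_{+}(x) = \max(\Delta f(x), 0)$ and $\varphi_{-}(x) = \max(-\Delta f(x), 0)$, so that $\Delta f = \varphi_{+} - \varphi_{-}$ and both $\varphi_{\pm}$ are non-negative. By Lemma \ref{l13}, $\Delta f \in C^{\infty}(\Pi_{2})$ with $|\Delta f(x)| \le C_{1} e^{-w^{4/3}}$, so $\varphi_{\pm}$ are non-negative and bounded by $C_{1} e^{-w^{4/3}}$. Since $\Delta f$ is smooth on $\overline{\Omega} \cap \{w \ge w_{2}\}$ and the map $t \mapsto \max(t,0)$ is $1$-Lipschitz, the functions $\varphi_{\pm}$ are Lipschitz and hence belong to $C^{\alpha}(\overline{\Omega})$ for every $\alpha \in (0,1]$. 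They also lie in $L_{2}(\Omega)$ because $e^{-w^{4/3}} \in L_{2}((0,\infty) \times \T^{3})$.

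Next I would invoke Theorem \ref{t41} to obtain unique solutions $u_{\pm} \in \mathring W_{2}^{1}(\Omega)$ to the Dirichlet problems
\begin{equation*}
-\Delta u_{\pm} = \varphi_{\pm} \text{ in } \Omega, \qquad u_{\pm}|_{\partial\Omega} = 0.
\end{equation*}
By linearity and uniqueness, $u = u_{+} - u_{-}$. Now Theorem \ref{t48}, applied with $\tilde C = C_{1}$, gives
\begin{equation*}
0 \le u_{\pm}(x) \le C_{6} C_{1}\, w^{-2/3} e^{-w^{4/3}} \qquad \text{for } w \ge w_{6}.
\end{equation*}
Therefore
\begin{equation*}
|u(x)| \le u_{+}(x) + u_{-}(x) \le 2 C_{6} C_{1}\, w^{-2/3} e^{-w^{4/3}} \qquad \text{for } w \ge w_{6},
\end{equation*}
so the conclusion holds with $C_{8} = 2 C_{6} C_{1}$, which depends only on the choice of $\zeta$ (through $C_{1}$).

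There is no real obstacle here: Lemma \ref{l13} supplies the exponential bound on $\Delta f$, linearity lets us separate signs, and regularity of the positive and negative parts is immediate because $\max(\cdot,0)$ preserves Hölder regularity. The only minor point to check is that the splitting preserves the hypotheses of Theorem \ref{t48} — namely $L_{2}$, boundedness, $C^{\alpha}$ up to the boundary, and non-negativity — which follows from the smoothness of $\Delta f$ and the Lipschitz property of $t \mapsto \max(t,0)$.
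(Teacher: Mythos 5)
Your proof is correct and follows essentially the same route as the paper: split $\Delta f$ into positive and negative parts, verify the hypotheses of Theorem~\ref{t48} (non-negativity, $L_2$, $L_\infty$, $C^\alpha$), apply the theorem to each piece, and combine by linearity and uniqueness. The only slack is in the final step, where the paper uses $-u^{(-)} \le u \le u^{(+)}$ to get $C_8 = C_6 C_1$, whereas your bound $|u| \le u_{+} + u_{-}$ yields the harmless factor~$2$.
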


\begin{proof}
For $x \in \O$ we have the estimate
$|\D f(x)| \le 10 e^{-\al(w)^{4/3}}$.
Indeed, if $|w| \ge w_{m+1}$ then it follows directly from Lemma \ref{l13}.
If $|w| \le w_{m+1}$ then by Lemma \ref{l13}
$$
|\D f(x)| \le 40 m |f(x)| \le 80 m h e^{-\al(w)^{4/3}} < 10 e^{-\al(w)^{4/3}}
$$
by definition of the numbers $m$ and $h$.

Now, denote
$$
(\D f)_+ (x) = \max (\D f(x), 0), \qquad
(\D f)_- (x) = \max (0, - \D f(x)).
$$
Then
$$
\D f = (\D f)_+ - (\D f)_-, \qquad
(\D f)_\pm (x) \ge 0 \quad \forall \ x.
$$
By Lemma \ref{l13} $\D f \in C^\infty (\Pi)$,
therefore, $(\D f)_\pm \in C^\ga (\Pi)$ for all $\ga < 1$.

Denote by $u^{(\pm)} \in \mathring W_2^1 (\O)$ the solutions to the Dirichlet problems
for the equations $- \D u^{(\pm)} = (\D f)_\pm$.
By Theorem \ref{t48}
$$
0 \le u^{(\pm)} (x) \le 10 \, R(w)^2 e^{-\al(w)^{4/3}}.
$$
The function $u^{(+)} - u^{(-)}$ is a solution to the Dirichlet problem 
for the equation $- \D u = \D f$.
Due to uniqueness of the solution we have $u = u^{(+)} - u^{(-)}$.
Therefore,
$|u(x)| \le 10 R(w)^2 e^{-\al(w)^{4/3}}$.
\end{proof}

\begin{rem}
The function $u$ is even, $u((w,y)) = u((-w,y))$.
\end{rem}

\section{Function F}
\subsection{Construction of the function $F$}
Define in $\O$ the function
$$
g(x) = f(x) + u(x),
$$
where the functions $f$ and $u$ are defined in \S \ref{s1} and in Corollary \ref{c49}
respectively.
The function $g$ solves the problem
$$
\begin{cases}
\D g(x) = 0 \quad \text{in} \ \O, \\
\left.g\right|_{\dd\O} = \left.f\right|_{\dd\O} .
\end{cases}
$$
In the whole cylinder $\Pi$ we define the function 
$$
f^* (x) = \begin{cases}
f(x), \quad & \text{if} \ x \notin \O, \\
g(x), \quad & \text{if} \ x \in \O.
\end{cases}
$$
Clearly, $f^* \in C(\Pi)$ and
\begin{equation}
\label{51}
|f^*(x)| \le 2 \,e^{-\al(w)^{4/3}} \qquad \forall \ x \in \Pi.
\end{equation}

Introduce the radius
$$
\rho : = \frac{h}4
$$
where $h$ is defined by \eqref{h}.
Fix a function $\psi \in C_0^\infty (B_\rho)$, $B_\rho \subset \R^4$, 
such that $\int_{B_\rho} \psi (x) \, dx = 1$, $\psi(x) \ge 0$ everywhere, 
and this function is radially symmetric, $\psi (x) = \hat\psi (|x|)$.
Noting that
\begin{equation}
\label{52}
\al(w)^{4/3} \int_{\R^4} \psi \left((x-\tilde x) \al(w)^{1/3}\right) \, d \tilde x = 1,
\end{equation}
we define the function
$$
F(x) := \al(w)^{4/3} \int_\Pi \psi \left((x-\tilde x) \al(w)^{1/3}\right) f^* (\tilde x) \, d \tilde x.
$$
Clearly, $F \in C^\infty (\Pi)$.
It is also clear that the functions $g$, $f^*$ and $F$ are even.

\begin{lemma}
\label{l51}
We have
$$
|F(x)| \le 3 \,e^{-\al(w)^{4/3}}.
$$
\end{lemma}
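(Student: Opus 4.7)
\medskip

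The plan is to exploit two facts: first, the pointwise bound \eqref{51} for $f^*$; second, the fact that the mollifying kernel $\psi\bigl((x-\tilde x)w^{1/3}\bigr)$ is supported in the ball $|x-\tilde x| \le \rho/w^{1/3}$, which is precisely the scale on which $e^{-w^{4/3}}$ varies by a bounded factor. The identity \eqref{52} will then turn the mollification into just a multiplicative constant.

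First I would localize the integration: since $\psi$ is supported in $B_\rho \subset \R^4$, the integrand vanishes unless $|x - \tilde x| \le \rho/w^{1/3}$, and in particular $|w - \tilde w| \le \rho/w^{1/3}$. For $w \ge w_6$ the radius $\rho/w^{1/3}$ is much smaller than $w_6 - w_5$, so the ball of integration stays inside $\Pi_6$ where $f^*$ is defined and satisfies \eqref{51}.

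The main computation is to control the ratio $e^{-\tilde w^{4/3}}/e^{-w^{4/3}}$ on this ball. If $\tilde w \ge w$, then the ratio is $\le 1$ and nothing to do. If $\tilde w \le w$, I apply Lemma \ref{l47} with $r = w - \tilde w \le \rho/w^{1/3}$ to obtain
\begin{equation*}
w^{4/3} - \tilde w^{4/3} \le \frac43\, w^{1/3}(w-\tilde w) \le \frac{4\rho}3,
\end{equation*}
hence $e^{-\tilde w^{4/3}} \le e^{4\rho/3}\, e^{-w^{4/3}}$ uniformly on the support of the kernel.

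Combining this with \eqref{51} and then with the normalization \eqref{52},
\begin{equation*}
|F(x)| \le (C_8+1)\,e^{4\rho/3}\,e^{-w^{4/3}}\cdot
w^{4/3}\int_{\R^4} \psi\bigl((x-\tilde x)w^{1/3}\bigr)\, d\tilde x
= (C_8+1)\,e^{4\rho/3}\,e^{-w^{4/3}},
\end{equation*}
which is the claimed bound with $C_9 = (C_8+1)e^{4\rho/3}$. There is no real obstacle here; the only point that requires care is checking that the scaling $w^{1/3}$ built into the mollifier is tuned exactly to the growth rate $\frac43 w^{1/3}$ of the derivative of $w^{4/3}$, which is the reason a single exponential prefactor suffices instead of one blowing up with $w$.
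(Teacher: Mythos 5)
Your proof is correct and takes exactly the route the paper itself indicates; the paper's proof is the single line ``Follows from \eqref{51}, \eqref{48} and \eqref{52},'' and you have supplied precisely the computation those three references are pointing to (use the support of $\psi$ to localize to $|x-\tilde x|\le\rho w^{-1/3}$, apply \eqref{48} with $r=\rho w^{-1/3}$ to get the uniform factor $e^{4\rho/3}$, then invoke \eqref{51} and the normalization \eqref{52}). One small inaccuracy: you claim the ball of integration ``stays inside $\Pi_6$'' because $\rho w^{-1/3}\ll w_6-w_5$, but since the ball is centered at $w\ge w_6$, for $w$ near $w_6$ it dips slightly below $w_6$; the justification you want is that it stays inside $\Pi_5$ (or $\Pi_2$), where $f^*$ is still defined, and that $\rho$ is so small this has no effect on the bound. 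This is a negligible slip — the paper itself leaves it implicit — and does not affect the validity of the argument.
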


\begin{proof}
It follows from \eqref{51}, \eqref{48} and \eqref{52}.
\end{proof}

\begin{lemma}
\label{l52}
If $|x-\tilde x| \le \rho\, \al(w)^{-1/3}$ then $|f_0(x) - f_0(\tilde x)| \le h/2$.
\end{lemma}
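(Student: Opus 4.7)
The plan is a one-step mean value estimate along the line segment from $\tilde x$ to $x$, combined with the pointwise gradient bound of Lemma \ref{l12}. Parametrizing $x_t = \tilde x + t(x-\tilde x)$, $t \in [0,1]$, the fundamental theorem of calculus gives
$$
f_0(x) - f_0(\tilde x) = \int_0^1 \n f_0(x_t) \cdot (x-\tilde x)\, dt,
$$
so the claim reduces to controlling $\sup_t |\n f_0(x_t)|$ and then multiplying by $|x - \tilde x| \le \rho w^{-1/3}$. Since $w \ge w_2$ and $\rho = h/(2 C_0)$ is microscopic, the first coordinate $w_t$ of every $x_t$ differs from $w$ by at most $\rho w^{-1/3}$, so the segment sits comfortably inside $\Pi_2$ and Lemma \ref{l12} is applicable all along it.

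For a point $x_t$ with $w_{k_t} \le w_t \le w_{k_t+1}$, Lemma \ref{l12} yields $|\n f_0(x_t)| \le C_0 \sqrt{k_t}$. Inverting the definition $w_k = (27/8) k^{3/2}$ gives $\sqrt{k_t} = (2/3)\, w_{k_t}^{1/3} \le (2/3)\, w_t^{1/3}$, hence
$$
|\n f_0(x_t)| \le \frac{2 C_0}{3}\, w_t^{1/3}.
$$
To pass from $w_t^{1/3}$ back to $w^{1/3}$ I use $w_t \le w + \rho w^{-1/3} = w(1 + \rho w^{-4/3})$, where $\rho w^{-4/3} \le \rho w_2^{-4/3}$ is astronomically small because $h = (2e)^{-10}$. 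So $w_t^{1/3} \le (1 + \delta)\, w^{1/3}$ with $\delta$ negligible, and
$$
|f_0(x) - f_0(\tilde x)| \le \rho w^{-1/3} \cdot \frac{2 C_0}{3}(1 + \delta)\, w^{1/3} = \frac{h}{3}(1 + \delta) < \frac{h}{2},
$$
where the last equality uses $\rho = h/(2 C_0)$ and the two powers of $w^{\pm 1/3}$ cancel.

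No step is substantively difficult; the whole lemma is essentially a dimensional check that the choice $\rho = h/(2 C_0)$ was made exactly right. The only mild book-keeping is verifying that the $w_t \to w$ passage is lossless, which is immediate from the extravagantly small size of $\rho$. Even a crude bound such as $w_t^{1/3} \le (3/2) w^{1/3}$ would still give $|f_0(x) - f_0(\tilde x)| \le C_0 \rho = h/2$ with room to spare once $\delta$ is accounted for, so there is no sharp constant to chase.
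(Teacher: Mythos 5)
Your proof is correct and follows essentially the same route as the paper's: both bound $|f_0(x)-f_0(\tilde x)|$ by $|x-\tilde x|$ times the supremum of $|\nabla f_0|$ along the segment, invoke the gradient bound $|\nabla f_0| \le C_0\sqrt k$ from Lemma \ref{l12}, and observe that the choice $\rho = h/(2C_0)$ makes the powers of $w^{1/3}$ cancel. The paper checks the constant by writing $w^{1/3} \ge w_{k-1}^{1/3} = \frac{3}{2}\sqrt{k-1} \ge \sqrt k$ rather than inverting $w_k = \frac{27}{8}k^{3/2}$ as you do, but this is a cosmetic difference.
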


\begin{proof}
Without loss of generality one can assume $\tilde w \ge 0$.
If $w \le w_m$ then $\al(w) = w_m$, $\tilde w \le w_{m+1}$,
and due to Lemma \ref{l12}
$$
|f_0(x) - f_0(\tilde x)| \le \sqrt{4m+5} \,|y-\tilde y|  
\le \sqrt{4m+5} \, \rho\, w_m^{-1/3} = \frac{\sqrt{4m+5} \, h}{6 \sqrt m} < \frac{h}2.
$$

If $w_k \le w \le w_{k+1}$, $k \ge m$, then 
$\al(w) \ge w_k$, $\tilde w \in [w_{k-1}, w_{k+2}]$ and by virtue of Lemma \ref{l12}
$$
|f_0(x)-f_0(\tilde x)| \le \sqrt{5(k+1)} \,|x-\tilde x|  
\le \sqrt{5(k+1)}\,\rho\, w_k^{-1/3} = \frac{\sqrt{5(k+1)} \, h}{6 \sqrt k} < \frac{h}2.
\qquad \qedhere
$$
\end{proof}

We consider also the set
$$
U := \left\{x \in\Pi : |f_0(x)| > h\right\}.
$$
Clearly,
$$
U \cup \O = \Pi.
$$

\begin{lemma}
\label{l53}
If $x \in U$ then
$$
|F(x)| \ge \frac{h}8\, e^{-\al(w)^{4/3}}.
$$
\end{lemma}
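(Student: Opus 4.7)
The plan is to show that $f^*$ has a definite sign and is bounded below (in absolute value) by $\tfrac{h}{4}e^{-\tilde w^{4/3}}$ on the whole support of the mollifier, so the integral defining $F$ cannot shrink much.

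First, I would reduce to the case $f_0(x) > h$ (the case $f_0(x) < -h$ being entirely symmetric). For any $\tilde x$ with $|x - \tilde x| \le \rho w^{-1/3}$, Lemma \ref{l52} gives
$$f_0(\tilde x) \ge f_0(x) - h/2 > h/2 > 0.$$
Hence $f(\tilde x) = f_0(\tilde x) e^{-\tilde w^{4/3}} \ge (h/2) e^{-\tilde w^{4/3}}$ on this ball.

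Next I would bound $f^*(\tilde x)$ below, splitting into two cases. If $\tilde x \notin \O$, then $f^*(\tilde x) = f(\tilde x) \ge (h/2) e^{-\tilde w^{4/3}}$. If $\tilde x \in \O$, then $f^*(\tilde x) = f(\tilde x) + u(\tilde x)$; Corollary \ref{c49} gives $|u(\tilde x)| \le C_8 \tilde w^{-2/3} e^{-\tilde w^{4/3}}$. Since $w \ge w_{k_*}$ and $\rho$ is tiny, $\tilde w \ge w_{k_*-1}$, so the choice of $k_*$ ensures $C_8 \tilde w^{-2/3} \le h/4$; hence
$$f^*(\tilde x) \ge (h/2 - h/4)\, e^{-\tilde w^{4/3}} = (h/4)\, e^{-\tilde w^{4/3}}.$$

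Finally I would compare $e^{-\tilde w^{4/3}}$ with $e^{-w^{4/3}}$. Since $|\tilde w - w| \le \rho w^{-1/3}$, the mean value theorem gives
$$|\tilde w^{4/3} - w^{4/3}| \le \tfrac{4}{3}(2w)^{1/3}\,\rho\, w^{-1/3} = \tfrac{4}{3}\cdot 2^{1/3}\rho,$$
which is far less than $\ln 2$ by the definition \eqref{h} of $h$ and hence of $\rho = h/(2C_0)$. Therefore $e^{-\tilde w^{4/3}} \ge \tfrac{1}{2}\, e^{-w^{4/3}}$ throughout the support of the mollifier. Plugging this into the definition of $F$ and using \eqref{52},
$$F(x) = w^{4/3}\!\!\int \psi\!\left((x-\tilde x)w^{1/3}\right) f^*(\tilde x)\, d\tilde x \ge \tfrac{h}{4}\cdot \tfrac{1}{2}\,e^{-w^{4/3}} \cdot w^{4/3}\!\!\int \psi\!\left((x-\tilde x)w^{1/3}\right)d\tilde x = \tfrac{h}{8}\, e^{-w^{4/3}}.$$

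The only subtle point is that the sign information from Lemma \ref{l52} has to survive the small perturbation by $u$ inside $\O$; this is exactly what the choice of $k_*$ buys us. Everything else is arithmetic: verifying that $\tilde w$ stays in the regime where Corollary \ref{c49} applies, and that the exponential factors are comparable.
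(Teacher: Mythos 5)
Your proof is correct and follows essentially the same approach as the paper: reduce to the case $f_0(x) > h$, use Lemma~\ref{l52} and Corollary~\ref{c49} to get the uniform lower bound $f^*(\tilde x) \ge \tfrac{h}{4} e^{-\tilde w^{4/3}}$ over the support of the mollifier, compare the exponential factors, and integrate. The only cosmetic difference is that you bound $e^{-\tilde w^{4/3}}$ uniformly from below by $\tfrac12 e^{-w^{4/3}}$ via the mean value theorem on $|\tilde w^{4/3}-w^{4/3}|$, whereas the paper evaluates the minimum at $\tilde w = w + \rho w^{-1/3}$ and invokes Lemma~\ref{l47}; both are the same computation.
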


\begin{proof}
Once again, it is sufficient to consider $w \ge 0$.
Assume first that $f_0(x) > h$.
Consider the points $\tilde x \in B_{\rho \al(w)^{-1/3}} (x)$.
By the preceding Lemma
$$
f_0 (\tilde x) > \frac{h}2 .
$$
If $\tilde x \notin \O$ then 
$f_0(\tilde x) > 2h$, and $f^* (\tilde x) = f(\tilde x) > 2 h e^{-\al(\tilde w)^{4/3}}$.
If $\tilde x \in \O$ then 
$$
f^* (\tilde x) = g(\tilde x) \ge f(\tilde x) - |u(\tilde x)| 
\ge \left(\frac{h}2 - 10 R(\tilde w)^2\right) e^{-\al(\tilde w)^{4/3}} 
\ge \frac{h}6 \,e^{-\al(\tilde w)^{4/3}}
$$
due to Corollary \ref{c49}.
In both cases 
$$
f^*(\tilde x) \ge \frac{h}6 e^{-\al(\tilde w)^{4/3}}.
$$
Therefore,
$$
F(x) \ge \frac{h}6 \,e^{-\al(w + \rho \al(w)^{-1/3})^{4/3}} \ge \frac{h}8 \,e^{-\al(w)^{4/3}},
$$
because due to Corollary \ref{c145}
\begin{eqnarray*}
\al\left(w + \rho \al(w)^{-1/3}\right)^{4/3} - \al(w)^{4/3} 
\le 4 \rho \al(w)^{-1/3} \left(\al(w) + 3\rho \al(w)^{-1/3}\right)^{1/3} \\
= 4 \rho \left(1 + 3 \rho \al(w)^{-4/3}\right)^{1/3} 
\le 4 \rho \left(1 + 3 \rho w_m^{-4/3}\right)^{1/3} < \ln \frac43.
\end{eqnarray*}
In the case $f_0(x) < - h$ a similar argument shows that
$$
F(x) \le - \frac{h}8 \,e^{-\al(w)^{4/3}}.
\qquad \qedhere
$$
\end{proof}

\begin{lemma}
\label{l54}
If $x \notin U$ then $F(x) = g(x)$.
\end{lemma}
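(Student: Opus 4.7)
The plan is to show that when $x \notin U$ the mollifier in the definition of $F(x)$ samples $f^*$ only at points of $\O$, where $f^* = g$ and $g$ is harmonic; the radial symmetry of $\psi$ together with the mean value property should then collapse the mollification to $g(x)$.

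First I would verify the containment $\overline{B_{\rho w^{-1/3}}(x)} \subset \O$. The assumption $x \notin U$ means $|f_0(x)| \le h$. For any $\tilde x$ with $|\tilde x - x| \le \rho\, w^{-1/3}$, Lemma \ref{l52} (applicable since $w \ge w_6 > w_2$) gives $|f_0(\tilde x) - f_0(x)| \le h/2$, so
$$
|f_0(\tilde x)| \le h + \tfrac{h}{2} = \tfrac{3h}{2} < 2h,
$$
and hence $\tilde x \in \O$. In particular the support of $\psi\bigl((x-\,\cdot\,)\, w^{1/3}\bigr)$ lies entirely in $\O$, where $f^* = g$, so $F(x)$ equals the mollification of $g$.

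Next I would exploit the harmonicity of $g$ on $\O$. By construction $g = f + u$ with $-\D u = \D f$ in $\O$, hence $\D g = 0$ distributionally; since $\D f \in C^\infty(\Pi_2)$, elliptic regularity yields $g \in C^\infty(\O)$, so $g$ is a classical harmonic function there. After the substitution $z = (\tilde x - x)\, w^{1/3}$ and the passage to polar coordinates $z = r\omega$ with $r \in (0,\rho)$ and $\omega \in S^3 \subset \R^4$, the radial symmetry $\psi(z) = \hat\psi(|z|)$ reduces $F(x)$ to
$$
\int_0^\rho \hat\psi(r)\, r^3 \int_{S^3} g\bigl(x + w^{-1/3} r \omega\bigr)\, d\omega\, dr.
$$
For each $r \in (0,\rho)$ the closed ball $\overline{B_{w^{-1/3} r}(x)}$ lies in $\O$ by the first step, so the mean value property for harmonic functions in $\R^4$ gives $\int_{S^3} g(x + w^{-1/3} r \omega)\, d\omega = 2\pi^2\, g(x)$. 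Pulling $g(x)$ outside the integrals and using the normalisation $\int_{B_\rho} \psi = 2\pi^2 \int_0^\rho \hat\psi(r)\, r^3\, dr = 1$, I arrive at $F(x) = g(x)$. The only delicate point is the containment $\overline{B_{\rho w^{-1/3}}(x)} \subset \O$, which depends entirely on Lemma \ref{l52} and the strict inequality $3h/2 < 2h$; the remainder is the routine spherical averaging identity for four-dimensional harmonic functions.
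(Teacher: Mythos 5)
Your argument is correct and follows the paper's proof essentially verbatim: both use Lemma \ref{l52} to confirm that the mollifier's support lies inside $\O$, then pass to spherical coordinates and invoke the mean-value property of the harmonic function $g$ together with the normalization of $\psi$. The only cosmetic difference is that you first rescale to $z=(\tilde x - x)w^{1/3}$ before going polar, while the paper works in the unscaled coordinate $\tilde x - x$ directly; the steps are equivalent.
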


\begin{proof}
Let $x \notin U$. 
By definition $|f_0(x)| \le h$.
By Lemma \ref{l52} $|f_0(\tilde x)| \le \frac{3h}2$ if $|\tilde x - x| \le \rho \al(w)^{-1/3}$, 
thus 
$$
B_{\rho \al(w)^{-1/3}} (x) \subset \O.
$$
Therefore,
$$
F(x) =  \al(w)^{4/3} 
\int_{B_{\rho \al(w)^{-1/3}}(x)} \psi \left((x-\tilde x) \al(w)^{1/3}\right) g (\tilde x) \, d \tilde x.
$$
In spherical coordinates
$$
\tilde x - x = (t; \te), \quad t \in [0, \infty), \ \te \in S^3,
$$
we have 
$$
F(x) =  \al(w)^{4/3} \int_0^{\rho \al(w)^{-1/3}} \int_{S^3} 
\hat \psi \left(t \al(w)^{1/3}\right) g \left(x+ (t;\te)\right) t^3 dt \, dS(\te).
$$
The function $g$ is harmonic in $\O$, 
so by the mean value property
$$
\int_{S^3} g \left(x+ (t;\te)\right) \, dS(\te) = 2 \pi^2 g(x).
$$
Moreover,
$$
2 \pi^2 \int_0^{\rho \al(w)^{-1/3}} \hat \psi \left(t \al(w)^{1/3}\right) t^3 dt 
= \int_{|z| \le \rho \al(w)^{-1/3}} \psi \left(z \al(w)^{1/3}\right) dz = \al(w)^{-4/3}.
$$
Therefore, $F(x) = g(x)$.
\end{proof}

\begin{cor}
\label{c55}
If $x \notin U$ then $\D F(x) = 0$.
\end{cor}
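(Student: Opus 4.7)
\noindent\textbf{Plan of proof for Corollary \ref{c55}.} The strategy is to upgrade the pointwise identity of Lemma \ref{l54} to an identity of functions on a full open neighborhood of $x$, after which harmonicity of $g$ in $\Omega$ makes the vanishing of $\Delta F(x)$ automatic. More precisely, I will show that $F \equiv g$ on some open ball $B_\delta(x) \subset \Omega$; since $g$ is harmonic in $\Omega$ by construction (see the beginning of \S 6), this immediately gives $\Delta F(x) = \Delta g(x) = 0$.

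To obtain the stronger statement, I would reexamine the proof of Lemma \ref{l54} and isolate the property it really uses. The assumption $x \notin U$ enters only through Lemma \ref{l52}, which gives $|f_0(\tilde x)| \le 3h/2 < 2h$ for every $\tilde x \in B_{\rho w^{-1/3}}(x)$, and hence the inclusion $B_{\rho w^{-1/3}}(x) \subset \Omega$. Once this inclusion is established, the mean value property for the harmonic function $g$ on spheres centered at $x$, combined with the radial symmetry of $\psi$ and the normalization \eqref{52}, yields $F(x) = g(x)$ exactly as in the proof of Lemma \ref{l54}.

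Next I would observe that this ball inclusion is stable under small perturbations of $x$. Since $|f_0(x)| \le h$ with strict headroom $3h/2 < 2h$, continuity of $f_0$ and of the mollification scale $w \mapsto \rho w^{-1/3}$ produces a $\delta = \delta(x) > 0$ such that for every $x' \in B_\delta(x)$ one still has $|f_0(x')| \le 5h/4$ and hence, again by Lemma \ref{l52}, $|f_0(\tilde x)| \le 7h/4 < 2h$ for all $\tilde x \in B_{\rho w'^{-1/3}}(x')$. Thus $B_{\rho w'^{-1/3}}(x') \subset \Omega$ for every $x' \in B_\delta(x)$, and the argument of Lemma \ref{l54} applies verbatim to each such $x'$, giving $F(x') = g(x')$ throughout $B_\delta(x)$.

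With $F \equiv g$ on the open ball $B_\delta(x) \subset \Omega$ and $\Delta g = 0$ in $\Omega$, the conclusion $\Delta F(x) = 0$ follows. The only delicate point is verifying the open neighborhood step: one must be careful that the $w$-dependence of the mollification radius $\rho w^{-1/3}$ does not spoil the inclusion when $x'$ is perturbed, but the strict inequality $3h/2 < 2h$ leaves ample slack, so this is merely a continuity argument rather than a genuine obstacle.
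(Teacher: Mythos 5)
Your argument is correct, and it supplies a step the paper leaves implicit: a pointwise identity $F(x) = g(x)$ at a single $x$ does not by itself yield $\Delta F(x) = 0$, and since $U$ is open, the set $\{x : |f_0(x)|\le h\}$ on which Lemma \ref{l54} operates is closed, so one genuinely needs the identity on an open neighborhood. You obtain this by noting that the hypothesis $|f_0(x)|\le h$ enters the proof of Lemma \ref{l54} only through the ball inclusion $B_{\rho w^{-1/3}}(x)\subset\Omega$, which by Lemma \ref{l52} holds under the weaker, open condition $|f_0(x)| < 3h/2$; hence $F\equiv g$ on the open set $\{|f_0|<3h/2\}\supset\{|f_0|\le h\}$, and harmonicity of $g$ finishes the job. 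This is the intended reading of the corollary, phrased a bit more conservatively than necessary (your $\delta$-perturbation argument and the constants $5h/4$, $7h/4$ could be replaced by the single observation that Lemma \ref{l54}'s proof already works verbatim on the open superset $\{|f_0|<3h/2\}$), but it is sound.
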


\subsection{Laplacian of $F$}
\begin{lemma}
\label{l55}
We have
\begin{equation}
\label{D1}
\left|\D_x \left(\al(w)^{4/3} \psi \left((x-\tilde x) \al(w)^{1/3}\right)\right) 
- \D_{\tilde x} \left(\al(w)^{4/3} \psi \left((x-\tilde x) \al(w)^{1/3}\right)\right)\right| 
\le C_1\, \al(w)^{2/3}
\end{equation}
and
\begin{equation}
\label{D2}
\left|\D_x \left(\al(w)^{4/3} \psi \left((x-\tilde x) \al(w)^{1/3}\right)\right)\right| \le C_1\, \al(w)^2.
\end{equation}
Here $C_1$ is a constant that depends on the choice of the function $\psi$ only.
\end{lemma}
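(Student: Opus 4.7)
The plan is to introduce $\Psi_w(z) := w^{4/3}\psi(zw^{1/3})$, so that the object of interest becomes $\Phi(x,\tilde x) := \Psi_w(x-\tilde x)$; this is precisely the standard $\R^4$-mollifier of scale $w^{-1/3}$. The guiding observation is that if $w$ were a free parameter independent of $x$, translation invariance in $z = x-\tilde x$ would force $\D_x\Phi=\D_{\tilde x}\Phi$ identically. Hence the difference in \eqref{D1} measures exactly the failure of translation invariance produced by $w$ being the first coordinate of $x$, and should be of strictly lower order than either Laplacian separately.

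I will split $x=(w,y)$ and set $z=x-\tilde x$. Every derivative in $\tilde x$ acts only through $z$, so $\D_{\tilde x}\Phi=(\D_z\Psi_w)(z)$. Among the four derivatives in $\D_x$, the three $y$-derivatives contribute identically to $\D_{\tilde x}\Phi$ and therefore cancel in the difference. For the remaining $\dd_w$, the variable $w$ appears both via $z_1=w-\tilde w$ and as the parameter of $\Psi_w$, and the chain rule yields
$$
\dd_w^2\Phi=(\dd_{z_1}^2\Psi_w)(z)+2(\dd_w\dd_{z_1}\Psi_w)(z)+(\dd_w^2\Psi_w)(z),
$$
so that
$$
\D_x\Phi-\D_{\tilde x}\Phi=2(\dd_{z_1}\dd_w\Psi_w)(z)+(\dd_w^2\Psi_w)(z).
$$

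To conclude I will compute $\dd_w\Psi_w(z) = \tfrac{4}{3}w^{1/3}\psi(zw^{1/3})+\tfrac{1}{3}w^{2/3}(z\cdot\n\psi)(zw^{1/3})$ and its further derivatives. The support condition $|z|\le\rho w^{-1/3}$ will be used repeatedly to trade each explicit factor $z$ for $w^{-1/3}$; each additional $\dd_{z_1}$ produces an extra $w^{1/3}$ via the chain rule, and each additional $\dd_w$ either lowers the overall power of $w$ by $w^{-1}$ or creates another factor $z\cdot\n$ that is again absorbed by the support. Straightforward bookkeeping, with constants depending only on $\|\psi\|_{C^2}$, yields $|(\dd_{z_1}\dd_w\Psi_w)(z)|\le Cw^{2/3}$ and $|(\dd_w^2\Psi_w)(z)|\le Cw^{-2/3}$, which gives \eqref{D1} since $w\ge w_6\gg 1$. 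For \eqref{D2}, the additional term $(\D_z\Psi_w)(z)=w^2(\D\psi)(zw^{1/3})$ is $O(w^2)$ and dominates the two contributions above.

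There is no genuine obstacle here: the whole lemma is a pure scaling calculation. The only care required is to track powers of $w$ consistently, converting each factor of $z$ that arises into $w^{-1/3}$ via the support property of $\psi$, so that the two correction terms come out one full power of $w$ smaller than the leading piece.
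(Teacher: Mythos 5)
Your proof is correct and uses the same underlying idea as the paper: a direct scaling computation where the support condition $|z|\le\rho w^{-1/3}$ converts each explicit factor of $z$ into a $w^{-1/3}$ gain, the $y$-Laplacians cancel, and only the $w$-dependence of $\Psi_w$ spoils translation invariance. Your organization via $\partial_w^2\Phi=(\partial_{z_1}+\partial_w)^2\Psi_w$ is a cleaner way to isolate the two correction terms than the paper's direct expansion of $\partial_w^2\Phi$ as $\left(\tfrac{4}{3}w-\tfrac{1}{3}\tilde w\right)^2(\partial_w^2\psi)+O(w^{2/3})$ followed by the identity $\left(\tfrac43 w-\tfrac13\tilde w\right)^2-w^2=\tfrac19(7w-\tilde w)(w-\tilde w)$, but it is the same computation; one small imprecision is that the bound also inherits a factor $\rho$ (hence depends on $\zeta$ through $C_0$), not solely on $\|\psi\|_{C^2}$, consistent with the stated dependence of $C_{10}$.
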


\begin{proof}
Clearly,
\begin{eqnarray}
\nonumber
\D_y \left(\al(w)^{4/3} \psi \left((x-\tilde x)\al(w)^{1/3}\right)\right) \\
= \al(w)^2 (\D_y \psi) \left((x-\tilde x) \al(w)^{1/3}\right)
\label{53} \\
= \D_{\tilde y} \left(\al(w)^{4/3} \psi \left((x-\tilde x) \al(w)^{1/3}\right)\right)
\nonumber
\end{eqnarray}
and
\begin{equation}
\label{54}
\dd_{\tilde w}^2 \left(\al(w)^{4/3} \psi \left((x-\tilde x) \al(w)^{1/3}\right)\right)
= \al(w)^2 (\dd_w^2 \psi) \left((x-\tilde x) \al(w)^{1/3}\right).
\end{equation}
We have also
\begin{eqnarray*}
\dd_w \left(\al(w)^{4/3} \psi \left((x-\tilde x) \al(w)^{1/3}\right)\right) \\
= \frac43 \, \al(w)^{1/3} \al'(w) \psi \left((x-\tilde x) \al(w)^{1/3}\right) \\
+ \al(w)^{4/3} \left(\al(w)^{1/3} + \frac13\, (w-\tilde w) \al(w)^{-2/3} \al'(w)\right) 
(\dd_w \psi) \left((x-\tilde x) \al(w)^{1/3}\right) \\
+ \frac13 \, \al(w)^{2/3} \al'(w)
\sum_{j=1}^3 (y_j - \tilde y_j) (\dd_{y_j} \psi) \left((x-\tilde x) \al(w)^{1/3}\right)
\end{eqnarray*}
and
\begin{eqnarray}
\nonumber
\dd_w^2 \left(\al(w)^{4/3} \psi \left((x-\tilde x) \al(w)^{1/3}\right)\right) \\
= \left(\al(w) + \frac13\, (w-\tilde w) \al'(w)\right)^2 
(\dd_w^2 \psi) \left((x-\tilde x) \al(w)^{1/3}\right) + O (\al(w)^{2/3}),
\label{55}
\end{eqnarray}
where we have taken into account the relations
$$
|x-\tilde x| \le \rho \al(w)^{-1/3}, \quad \tilde w = O(\al(w)),
\quad \psi, \n \psi, \n^2 \psi = O (1), 
$$
$$
\quad \al'(w) = O(1), \quad \al''(w) = O \left(\al(w)^{-1/3}\right).
$$
Further,
$$
\left(\al(w) + \frac13\, (w-\tilde w) \al'(w)\right)^2 - \al(w)^2 = O (\al(w)^{2/3}).
$$
Now, the result follows from \eqref{53}, \eqref{54} and \eqref{55}.
\end{proof}

Let us consider also the function
$$
G(x) := \al(w)^{4/3} \int_\Pi \psi \left((x-\tilde x) \al(w)^{1/3}\right) f (\tilde x) \, d \tilde x.
$$
Clearly, $G \in C^\infty (\Pi)$.

\begin{lemma}
\label{l56}
We have
$$
|\D G(x)| \le C_2 \,e^{-\al(w)^{4/3}},
$$
where the constant $C_2$ depends on the choice of the function $\psi$ only.
\end{lemma}

\begin{proof}
It is sufficient to consider $w\ge 0$.
Represent $\D G$ in the form
\begin{eqnarray*}
\D G(x) = 
\int\left(\D_x \left(\al(w)^{4/3} \psi \left((x-\tilde x) \al(w)^{1/3}\right)\right) 
- \D_{\tilde x} \left(\al(w)^{4/3} \psi \left((x-\tilde x) \al(w)^{1/3}\right)\right)\right) 
f(\tilde x) \, d\tilde x\\
+ \int \D_{\tilde x} \left(\al(w)^{4/3} \psi \left((x-\tilde x) \al(w)^{1/3}\right)\right) 
f(\tilde x) \, d\tilde x =: I_1 + I_2.
\end{eqnarray*}
Estimate the first integral using \eqref{D1}:
\begin{eqnarray*}
|I_1(x)| \le
C_1 \al(w)^{2/3} \int_{|\tilde x - x| \le \rho \al(w)^{-1/3}} |f(\tilde x)| \, d \tilde x \\
\le C_1 \al(w)^{2/3} e^{- \al(w-\rho \al(w)^{-1/3})^{4/3}} \,\frac{\pi^2 \rho^4 \al(w)^{-4/3}}2.
\end{eqnarray*}
By Corollary \ref{c145}
\begin{equation}
\label{al2}
\al(w)^{4/3} - \al(w-\rho \al(w)^{-1/3})^{4/3} 
\le 4 \rho = h.
\end{equation}
Therefore,
$$
|I_1(x)| \le \frac12 C_1 \pi^2 \rho^4  \al(w)^{-2/3} e^h e^{-\al(w)^{4/3}}.
$$

Next, integrating by parts we obtain
$$
I_2(x) = 
\int \al(w)^{4/3} \psi \left((x-\tilde x) \al(w)^{1/3}\right) \D f(\tilde x) \, d\tilde x.
$$
Therefore, by virtue of Corollary \ref{c155}, \eqref{52} and \eqref{al2}
$$
|I_2(x)| \le 40 m e^{- \al(w-\rho \al(w)^{-1/3})^{4/3}}
\le 40 m e^h e^{-\al(w)^{4/3}}. 
\qquad \qedhere
$$
\end{proof}

\begin{lemma}
\label{l57}
We have
$$
|\D F(x)| \le C_3 \,e^{-\al(w)^{4/3}} ,
$$
where the constant $C_3$ depends on the choice of the function $\psi$ only.
\end{lemma}

\begin{proof}
Once again we consider $w \ge 0$.
We have
$$
\D F(x) - \D G(x) = 
\int_\Pi \D_x \left(\al(w)^{4/3} \psi \left((x-\tilde x) \al(w)^{1/3}\right)\right)
\left(f^* (\tilde x) - f (\tilde x)\right) d \tilde x.
$$
If $\tilde x \notin \O$ then $f^* (\tilde x) - f (\tilde x) = 0$.
If $\tilde x \in \O$ then
$$
\left|f^* (\tilde x) - f (\tilde x)\right| = |u(\tilde x)| \le 10 R(\tilde w)^2 e^{-\al(\tilde w)^{4/3}}
$$
due to Corollary \ref{c49}.
Taking into account \eqref{D2} and \eqref{al2} we get
\begin{eqnarray}
\nonumber
\left|\D F(x) - \D G(x)\right| \le 
10 C_1 \al(w)^2 \int_{|\tilde x - x| \le \rho \al(w)^{-1/3}} 
R(\tilde w)^2 e^{-\al(\tilde w)^{4/3}} d\tilde x \\
\label{last}
\le 10 C_1 \al(w)^2 R(w-\rho \al(w)^{-1/3})^2 e^{-\al(w-\rho \al(w)^{-1/3})^{4/3}}
\frac{\pi^2 \rho^4 \al(w)^{-4/3}}2 \\
\le 5 C_1 \al(w)^{2/3} R(w-\rho \al(w)^{-1/3})^2 \pi^2 \rho^4 e^h e^{-\al(w)^{4/3}}.
\nonumber
\end{eqnarray}
Show that
\begin{equation}
\label{lastlast}
\al(w)^{1/3} R(w-\rho \al(w)^{-1/3}) \le 1.
\end{equation}
Indeed, if $w \le w_m$ then 
$$
\al(w)^{1/3} R(w-\rho \al(w)^{-1/3}) = 50 \sqrt h.
$$
If $w_m \le w \le w_{m+1}$, then 
$$
\al(w)^{1/3} R(w-\rho \al(w)^{-1/3}) \le 50 \sqrt h \,w_m^{-1/3} w_{m+1}^{1/3}.
$$
If $w_k \le w \le w_{k+1}$, $k \ge m+1$ then
$$
\al(w)^{1/3} R(w-\rho \al(w)^{-1/3}) \le 50 \sqrt h\, w_{k-1}^{-1/3} w_{k+1}^{1/3}.
$$
In all cases \eqref{lastlast} follows.
Now, the claim follows from \eqref{last}, \eqref{lastlast} and Lemma \ref{l56}.
\end{proof}

{\it Proof of Theorem \ref{t04}.}
The existence of such function $F$ 
follows from Lemma \ref{l51}, Lemma \ref{l53},
Corollary \ref{c55} and Lemma \ref{l57}.
\qed


\end{document}